\documentclass[a4paper,10pt
,reqno]{amsart}

\usepackage{amssymb,amsmath,amsthm,ascmac,array,bm,multirow,appendix}
\usepackage[dvipdfmx]{graphicx}
\usepackage{xcolor}
\usepackage{lscape}

\usepackage[
colorlinks=true,bookmarks=true,
bookmarksnumbered=true,bookmarkstype=toc,linktocpage=true
]{hyperref}

\allowdisplaybreaks
\numberwithin{equation}{section}

\usepackage{mathtools}
\mathtoolsset{showonlyrefs=true}

\newtheorem{thm}{Theorem}[section]

\newtheorem{lem}[thm]{Lemma}

\newtheorem{rem}[thm]{Remark}
\newtheorem{ass}[thm]{Assumption}

\newcommand{\mcc}{\mathcal{C}}

\newcommand{\mcf}{\mathcal{F}}
\newcommand{\mci}{\mathcal{I}}

\newcommand{\mcl}{\mathcal{L}}

\newcommand{\mbbh}{\mathbb{H}}

\newcommand{\mbbr}{\mathbb{R}}

\newcommand{\mbbu}{\mathbb{U}}
\newcommand{\mbby}{\mathbb{Y}}
\newcommand{\mbbz}{\mathbb{Z}}

\newcommand{\sig}{\sigma}
\newcommand{\ep}{\epsilon}
\newcommand{\D}{\Delta}
\newcommand{\Sig}{\Sigma}
\newcommand{\lam}{\lambda}

\newcommand{\Gam}{\Gamma}
\newcommand{\p}{\partial}

\newcommand{\cil}{\xrightarrow{\mcl}} 
\newcommand{\cip}{\xrightarrow{p}} 

\newcommand{\argmin}{\mathop{\rm argmin}} 
\newcommand{\argmax}{\mathop{\rm argmax}}
 
\newcommand{\trace}{\mathop{\rm trace}} 

\def\ds#1{\displaystyle{#1}} 

\def\nn{\nonumber}
\def\cadlag{c\`adl\`ag}

\def\sumj{\sum_{j=1}^{n}}

\def\pr{P}
\def\E{E}

\def\dim{\mathrm{dim}}

\def\tz{\theta_{0}}

\def\tes{\hat{\theta}_{n}}

\newcommand{\nY}{\overline{\mathsf{y}}} 

\newcommand{\betabic}{\mathrm{dpGQBIC}}
\newcommand{\gambic}{\mathrm{HGQBIC}}

\title[Robustified Gaussian quasi-BIC for volatility]
{Robustified Gaussian quasi-BIC for volatility}

\author[S. Eguchi]{Shoichi Eguchi}
\address{Faculty of Information Science and Technology, Osaka Institute of Technology, 1-79-1 Kitayama, Hirakata City, Osaka, 573-0196, Japan.}
\email{shoichi.eguchi@oit.ac.jp}

\author[H. Masuda]{Hiroki Masuda}
\address{Graduate School of Mathematical Sciences, University of Tokyo, 3-8-1 Komaba Meguro-ku Tokyo 153-8914, Japan.}
\email{hmasuda@ms.u-tokyo.ac.jp}

\date{\today}
\keywords{
BIC, Density-power divergence; Gaussian quasi-likelihood inference; volatility regression model.
}

\begin{document}
\setlength{\baselineskip}{4.5mm}

\maketitle

\begin{abstract}
We develop a theoretical foundation for robust model comparison in a class of non-ergodic continuous volatility regression models contaminated by finite-activity jumps. 
Using the density-power weighting and the H\"{o}lder(-inequality)-based normalization of the conventional Gaussian quasi-likelihood function, we propose two Schwarz-type statistics 
and also establish their model selection consistency with respect to the minimal true parametric volatility coefficient. Numerical experiments are conducted to illustrate our theoretical findings.
\end{abstract}

\section{Introduction}

Suppose that we are given a complete filtered probability space $(\Omega,\mcf,(\mcf_{t})_{t\in[0,T]},P)$ for a fixed time horizon $[0,T]$, on which the $d$-dimensional {\cadlag} process
\begin{align}
Y_{t}&=Y_{0}+\int_{0}^{t}\mu_{s-}ds+\int_{0}^{t}\sigma(X_{s-})dw_{s}+J_{t},
\label{se:trueY}
\end{align}
and the $d^{\prime}$-dimensional {\cadlag} process
\begin{align*}
X_{t}&=X_{0}+\int_{0}^{t}\mu_{s-}^{\prime}ds+\int_{0}^{t}\sigma_{s-}^{\prime}dw_{s}^{\prime}+J_{t}^{\prime}
\end{align*}
are defined.
The components are specified as follows:
\begin{itemize}
\item $\sigma:\mbbr^{d^{\prime}}\to\mbbr^{d}\otimes\mbbr^{r}$;
\item $\mu$, $\mu^{\prime}$, and $\sigma^{\prime}$ are processes in $\mbbr^{d}$, $\mbbr^{d^{\prime}}$, and $\mbbr^{d^{\prime}}\otimes\mbbr^{r^{\prime}}$, respectively;
\item $w$ and $w^{\prime}$ are standard Wiener processes in $\mbbr^{r}$ and $\mbbr^{r^{\prime}}$, respectively;
\item $J$ and $J^{\prime}$ are adapted finite-activity pure-jump in $\mbbr^d$ and $\mbbr^{d'}$, respectively.
\end{itemize}
As in the observations of $Y$ and $X$, we consider a discrete but high-frequency sample $\{(X_{t_{j}},Y_{t_{j}})\}_{j=0}^{n}$, where $t_{j}=t_{j}^{n}:=jh$ with $h=h_{n}:=T/n$.

In this paper, we are interested in relative model comparison of the parametric diffusion coefficient $\sigma$ in the model \eqref{se:trueY}.
Suppose that the candidate diffusion coefficients are given by
\begin{align*}
\sigma_{1}(x,\theta_{1}),\ldots,\sigma_{M}(x,\theta_{M}),
\end{align*}
where, for each $m\in\{1,\ldots,M\}$, $\theta_{m}\in\Theta_{m}\subset\mbbr^{p_{m}}$, and the parameter space $\Theta_{m}$ is assumed to be a bounded convex domain.
Then, for each $m\in\{1,\ldots,M\}$, the $m$-th candidate model $\mathcal{M}_{m}$ is described by
\begin{align}
Y_{t}&=Y_{0}+\int_{0}^{t}\mu_{s}ds+\int_{0}^{t}\sigma_{m}(X_{s-},\theta_{m})dw_{s}+J_{t}.
\label{se:cand.model}
\end{align}
The objective of this paper is to develop a Bayesian information criterion (BIC, \cite{Sch78}) for selecting the best model among $\mathcal{M}_{1}, \dots, \mathcal{M}_{M}$, treating observations affected by the jump-process components $J$ and $J'$ as \textit{dynamic outliers}.
That is, we want to select the best diffusion coefficient among the candidates, ignoring the jump components $J$ and $J'$.
To the best of our knowledge, there are no previous studies about a theoretical foundation for a robustified BIC in the contaminated volatility regression model \eqref{se:trueY}.


For the reader's convenience, we briefly review the relevant literature.
The information criteria are one of the most convenient and powerful tools for model selection, and the Akaike information criterion (AIC, \cite{Aka73}) and the BIC are often used.
These criteria are derived from different classical principles.
The AIC selects the model that minimizes the Kullback-Leibler divergence, which measures the discrepancy between the true model and the predictive model.
On the other hand, the BIC selects a model by maximizing the posterior probability given the data.
Based on the classical principles of AIC and BIC, several studies have investigated the model selection for stochastic differential equations (SDE) and robustified model selection; for example, \cite{Uch10}, \cite{FujUch14}, \cite{UchYos16}, \cite{EguMas18a}, \cite{EguMas24}, \cite{EguUeh21} have studied the model selection problem for SDEs, and \cite{KurHam18}, \cite{KurHam20}, \cite{Kur24} have studied the robustified model selection problem.
\cite{EguMas18a} proposed a BIC-type information criterion based on a stochastic expansion of the marginal quasi-log-likelihood and applied it to continuous semimartingales such as \eqref{se:cand.model}. 
\cite{EguMas18a} also showed the asymptotic properties of the proposed criterion.
\cite{KurHam18} and \cite{KurHam20} proposed the AIC- and BIC-type information criteria based on density-power divergence and proved their asymptotic properties.

Parameter estimation in candidate models is essential for deriving the information criterion, and several studies closely related to the present work have been conducted.
For statistical inference for SDEs based on the Gaussian quasi-likelihood function, see \cite{GenJac93}, \cite{UchYos13}, and references therein.
Moreover, \cite{LeeSon13} and \cite{Son17} investigated robust statistical inference for diffusion processes using density-power divergence, and \cite{EguMas25} studied robust statistical inference under model settings similar to those considered in this paper based on density-power and H\"{o}lder-based divergences.

The remainder of this paper is organized as follows. Section \ref{sec:pre} introduces the notation and the model setup. In Section \ref{sec:bic}, we propose our BIC-type statistics, which are robust against finite-activity jump variations; we then analyze their asymptotic properties to establish model selection consistency. Section \ref{sec:simu} presents numerical experiments that corroborate our theoretical findings. The technical proofs are given in Section \ref{sec:prf}. Finally, for the reader's convenience, we list the key technical tools in Section \ref{sec:appe}.

\section{Preliminaries} \label{sec:pre}

\subsection{Basic notation and setup}
For notational convenience, we introduce the following notations.
For any matrix $A$, we write $A^{\otimes 2}=AA^\top$, where $\top$ denotes transposition.
For a $K$th-order multilinear form $M=\{M^{(i_1\dots i_K)}:i_k=1,\dots, d_k;k=1,\dots,K\}\in\mbbr^{d_1}\otimes\dots\otimes\mbbr^{d_K}$ and $d_k$-dimensional vectors $u_k=\{u_k^{(j)}\}$, we set $M[u_1,\dots,u_K]:=\sum_{i_1=1}^{d_1}\dots\sum_{i_K=1}^{d_K} M^{(i_1,\dots,i_K)}u_1^{(i_1)}\dots u_K^{(i_K)}$. 
In particular, for matrices $A$ and $B$ of the same sizes, $A[B]:=\trace(AB^{\top})$ in case of $K=2$.
The symbol $\p_{a}^{k}$ stands for $k$-times partial differentiation with respect to variable $a$, and $I_{r}$ denotes the $r\times r$-identity matrix.
Moreover, the symbols $\cip$ and $\cil$ denote the convergence in probability and distribution, respectively.

The basic model setting is as follows.
Omitting the model index ``$m$" in \eqref{se:cand.model}, we consider the single model
\begin{align}
Y_{t}&=Y_{0}+\int_{0}^{t}\mu_{s}ds+\int_{0}^{t}\sigma(X_{s-},\theta)dw_{s}+J_{t},
\label{se:SDE.model}
\end{align}
where the diffusion coefficient depends on an unknown parameter $\theta\in\Theta\subset\mbbr^{p}$, and the parameter space $\Theta$ is assumed to be a bounded convex domain.
Let $\tz\in\Theta$ denote the true value of $\theta$, and we assume that $\sigma(\cdot,\tz)=\sigma(\cdot)$.
For a process $\xi$, define $\D_{j}\xi:=\xi_{t_{j}}-\xi_{t_{j-1}}$, and for any measurable function $f:\mbbr^{d}\times\Theta$, set $f_{j-1}(\theta):=f(X_{t_{j-1}},\theta)$.
We also define $S(x,\theta):=\sigma^{\otimes2}(x,\theta)$.

We denote by $\pr_\theta$ the distribution of the random elements
\begin{equation}
\left(Y,X,\mu,\mu',\sig',w,w',J,J'\right)
\nonumber
\end{equation}
associated with $\theta\in\overline{\Theta}$, and we write $\pr=\pr_{\tz}$.
The $d$-dimensional normal $N_d(\mu,\Sig)$-density is denoted by $\phi_d(\cdot;\mu,\Sig)$ and simply $\phi(\cdot):=\phi_d(\cdot;0,I_d)$.

\subsection{Robustified Gaussian quasi-likelihood inference}
\cite{EguMas25} studied robust statistical inference in a model setting similar to ours.
In this section, following \cite{EguMas25}, we briefly review the robustified Gaussian quasi-likelihood inference for \eqref{se:SDE.model} with jump contamination.

For the robustified Gaussian quasi-likelihood inference, we consider the density-power divergence 
from the true distribution $gd\mu$ to the statistical model $f_{\theta}d\mu$ defined by 
\begin{align*}
(f_{\theta};g) \mapsto \frac{1}{1+\lambda}\int\left\{f_{\theta}^{1+\lambda}-\left(1+\frac{1}{\lambda}\right)f_{\theta}^{\lambda}g+\frac{1}{\lambda}g^{1+\lambda}\right\}d\mu
\end{align*}
for some dominating $\sigma$-finite measure $\mu$, where $\lam=\lambda_n \in(0,\overline{\lam}]$ is a positive tuning parameter satisfying 
\begin{equation}
    \lam_n \to 0,\qquad n\to\infty.
\end{equation}
The speed of $\lam_n\to 0$ cannot be so fast (Assumption \ref{hm:lam.condition-2}); it is also possible to consider a fixed $\lam>0$ (Remark \ref{hm:rem_fixed.lam}), although in this case, the meaning of the marginal quasi-likelihood loses its natural interpretation. 
Here, the upper bound $\overline{\lam}>0$ is given, while we do not need to specify it in practice.
Applying this to our setting, we deal with the density-power weighting of the Gaussian-quasi likelihood function (GQLF) of \eqref{se:SDE.model}(\cite{GenJac93}, \cite{UchYos13}), and the density-power GQLF is defined by
\begin{align}
\mbbh_{n}(\theta;\lambda)
&=\sumj \det\big(S_{j-1}(\theta)\big)^{-\lambda/2} \left\{\frac{1}{\lambda}\phi\big(S_{j-1}(\theta)^{-1/2}\nY_j\big)^\lambda-\frac{(2\pi)^{-d\lambda/2}}{(\lambda+1)^{1+d/2}} \right\},
\label{hm:def_dp-H-0}
\end{align}
where $\nY_j=h^{-1/2}\D_{j}Y$.
Moreover, we define the H\"{o}lder-based GQLF \cite{EguMas25}: 
\begin{align}
\mbbh_{n}^{\flat}(\theta;\lambda)
=\sumj\frac{1}{\lambda}\det\big(S_{j-1}(\theta)\big)^{-\lambda/(2(\lambda+1))} \phi\big(S_{j-1}(\theta)^{-1/2}\nY_j\big)^{\lambda}.
\label{hm:def_ldp-H-0}
\end{align}
This is constructed from the H\"{o}lder inequality:
given two densities $f$ and $g$ with respect to a reference measure $\mu$ and a constant $\lam>0$, we have
\begin{equation}
    \int f^\lam g d\mu \le \bigg(\int f^{\lam+1} d\mu\bigg)^{\lam/(\lam+1)} \bigg(\int g^{\lam+1} d\mu\bigg)^{1/(\lam+1)},
\end{equation}
from which we have
\begin{equation}
\left(\int g^{\lam+1} d\mu\right)^{1/(\lam+1)} 
- \int \frac{f^\lam }{(\int f^{\lam+1}d\mu)^{\lam/(\lam+1)}} \,g d\mu \ge 0,
\end{equation}
where the equality holds if and only if $g=f$ a.e., thus defining a divergence between $f$ and $g$.

Given the value $\lambda$, the density-power Gaussian quasi-likelihood estimator (GQMLE) $\hat{\theta}_{n}(\lambda)$ and H\"{o}lder-based GQMLE $\hat{\theta}_{n}^{\flat}(\lambda)$ are defined by
\begin{align*}
\hat{\theta}_{n}(\lambda)\in\argmax_{\theta\in\bar{\Theta}}\mbbh_{n}(\theta;\lambda)
\end{align*}
and
\begin{align*}
\hat{\theta}_{n}^{\flat}(\lambda)\in\argmax_{\theta\in\bar{\Theta}}\mbbh_{n}^{\flat}(\theta;\lambda),
\end{align*}
respectively.
Under Assumptions \ref{hm:A_diff.coeff}--\ref{hm:A_lam}, the asymptotic mixed normality of density-power and H\"{o}lder-based GQMLEs is established in \cite[Theorem 3.4]{EguMas25}.
Regarding numerical performance, simulation studies in \cite{EguMas25} show that both estimators achieve reasonable performance, while the H\"{o}lder-based GQMLE can be computed more rapidly than the density-power GQMLE.

\begin{rem}
Let $\eta \sim N_p(0,I_p)$ be independent of $\mathcal{F}$.
Under Assumptions \ref{hm:A_diff.coeff}--\ref{hm:A_lam}, it follows from \cite[Theorem 3.4]{EguMas25} that the density-power and H\"{o}lder-based GQMLEs are asymptotically mixed normal:
\begin{align*}
\sqrt{n}(\hat{\theta}_{n}(\lambda)-\tz)&\cil \mathcal{I}(\tz)^{-1/2}\eta \sim MN_{p}\left(0,\mathcal{I}(\tz)^{-1}\right), \\
\sqrt{n}(\hat{\theta}_{n}^{\flat}(\lambda)-\tz)&\cil \mci(\tz)^{-1/2}\eta \sim MN_{p}\left(0,\mathcal{I}(\tz)^{-1}\right),
\end{align*}
where $\mathcal{I}\mathcal(\tz)=(\mathcal{I}_{kl}(\tz))_{k,l=1}^{p}$ is defined by
\begin{align*}
\mathcal{I}_{kl}(\tz)
=\frac{1}{2T}\int_{0}^{T}\trace\big( S^{-1} (\p_{\theta_k}S) S^{-1} (\p_{\theta_l} S)(X_{t},\tz) \big) dt.
\end{align*}
\end{rem}

\section{Gaussian quasi-BIC} \label{sec:bic}

Building on the density-power GQLF and the H\"{o}lder-based GQLF, we turn to Schwarz's type model comparison.
Let $\pi$ be the prior density for $\theta$.

\begin{ass}
\label{se:prior}
The prior density $\pi$ is bounded on $\Theta$, continuous, and positive at $\tz$.
\end{ass}

\subsection{Gaussian quasi-Bayesian information criterion}
The classical BIC methodology is based on a stochastic expansion of the marginal log-likelihood function.
For the derivation of the BIC-type information criterion, we consider the free energies at inverse temperature $\mathfrak{b}>0$ (see \cite{Wat13} for relevant background), defined as
\begin{align*}
\mathfrak{F}_{n}(\mathfrak{b};\lambda)
=-\log\left[\int_{\Theta}\exp\left\{\mathfrak{b}\left(\frac{1}{h^{d\lambda/2}}\mbbh_{n}(\theta;\lambda)-\frac{n}{\lambda}+\frac{n}{h^{d\lambda/2}}\right)\right\}\pi(\theta)d\theta\right]
\end{align*}
and
\begin{align*}
\mathfrak{F}_{n}^{\flat}(\mathfrak{b};\lambda)
=-\log\left[\int_{\Theta}\exp\left\{\mathfrak{b}\left(\frac{1}{\lam}\left(\mathsf{k}_{\lambda}\mbbh^{\flat}_n(\theta;\lam)-n\right)\right)\right\}\pi(\theta)d\theta\right],
\end{align*}
where
\begin{align*}
\mathsf{k}_{\lambda}
=(h^{d\lambda/2})^{-1/(\lam+1)} \lam \left\{  \frac{(2\pi)^{-d\lambda/2}}{(\lambda+1)^{d/2}} \right\}^{-\lam/(\lam+1)}.
\end{align*}
According to \cite[Remarks 3.1 and 3.2]{EguMas25}, both $\frac{1}{h^{d\lambda/2}}\mbbh_{n}(\theta;\lambda) - \frac{n}{\lambda} + \frac{n}{h^{d\lambda/2}}$ and $\frac{1}{\lam}\left(\mathsf{k}_{\lambda}\mbbh^{\flat}_n(\theta;\lam) - n \right)$ converge almost surely to the conventional GQLF $\mbbh_{n}(\theta)$ as $\lambda\to0$ with $n$ fixed.
Here $\mbbh_{n}(\theta)$ denotes the GQLF for $Y$ without jumps, which is defined as follows (\cite{UchYos13}):
\begin{align*}
\mbbh_{n}(\theta)=\sumj \log \phi_{d}\left(Y_{t_j};\, Y_{t_{j-1}},\, h S_{j-1}(\theta)\right).    
\end{align*}
Hence, the random functions $\mathfrak{F}_{n}(1;\lambda)$ and $\mathfrak{F}_{n}^{\flat}(1;\lambda)$ can be regarded as the marginal quasi-log-likelihood functions associated with the density-power GQLF and the H\"{o}lder-based GQLF, respectively.

The following theorem gives the stochastic expansions of $\mathfrak{F}_{n}$ and $\mathfrak{F}_{n}^{\flat}$, showing that heating-up is necessary to obtain the appropriate stochastic expansions.

\begin{thm} 
\label{se:thm_denbic}
Suppose that Assumptions \ref{hm:A_diff.coeff}--\ref{hm:A_lam} and \ref{se:prior} hold.
Then, we have
\begin{align}
\mathfrak{F}_{n}(h^{d\lambda/2};\lambda)
&=-\mbbh_{n}\big(\tes(\lambda);\lambda\big)+\frac{p}{2}\log n 
+\frac{nh^{d\lambda/2}}{\lambda}-n+O_{p}(1), \label{se:st_exp_den} \\
\mathfrak{F}_{n}^{\flat}(\lambda/\mathsf{k}_{\lambda};\lambda)
&=-\mbbh_{n}^{\flat}\big(\hat{\theta}_{n}^{\flat}(\lambda);\lambda\big)+\frac{p}{2}\log n 
+\frac{n}{\mathsf{k}_{\lambda}}+O_{p}(1). \label{se:st_exp_hol}
\end{align}
\end{thm}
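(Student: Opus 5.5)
We first reduce both statements to a Laplace-type expansion of a marginal integral of the density-power and H\"{o}lder-based GQLFs themselves. For \eqref{se:st_exp_den}, substituting $\mathfrak{b}=h^{d\lambda/2}$ gives
\begin{align*}
\mathfrak{F}_{n}(h^{d\lambda/2};\lambda)
=-\log\int_{\Theta}\exp\{\mbbh_{n}(\theta;\lambda)\}\pi(\theta)d\theta+\frac{nh^{d\lambda/2}}{\lambda}-n .
\end{align*}
Similarly, with $\mathfrak{b}=\lambda/\mathsf{k}_{\lambda}$ the exponent in $\mathfrak{F}_{n}^{\flat}$ becomes $\mbbh_{n}^{\flat}(\theta;\lambda)-n/\mathsf{k}_{\lambda}$. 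This is precisely why the heating-up is needed: the deterministic shifts separate off, and it remains to show in both cases that
\begin{equation}
\log\int_{\Theta}\exp\{\mbbh_{n}(\theta;\lambda)-\mbbh_{n}(\tes(\lambda);\lambda)\}\pi(\theta)d\theta=-\frac{p}{2}\log n+O_{p}(1),
\end{equation}
together with the analogous statement for $\mbbh^{\flat}_n$ and $\hat\theta^\flat_n(\lambda)$.

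To prove this, change variables $\theta=\tes(\lambda)+u/\sqrt{n}$, which produces the factor $n^{-p/2}$. It then suffices to show that
\begin{align*}
Z_n:=\int \exp\{\mbbh_{n}(\tes+u/\sqrt n;\lambda)-\mbbh_{n}(\tes;\lambda)\}\pi(\tes+u/\sqrt n)du
\end{align*}
is tight and that $1/Z_n$ is tight. I would carry this out as follows.
\begin{enumerate}
\item Invoke from \cite{EguMas25} (the ingredients of their Theorem 3.4 under Assumptions \ref{hm:A_diff.coeff}--\ref{hm:A_lam}) the consistency of $\tes(\lambda)$ and its $\sqrt n$-rate.
\item Use the same source for the uniform convergence $-n^{-1}\p_\theta^2\mbbh_n(\theta;\lambda)\to\mci(\theta)$ in probability, uniformly on a neighborhood of $\tz$, with $\mci(\tz)$ a.s. positive definite. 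The $h^{d\lambda/2}$-type normalizations are already built into $\mbbh_n$, so the leading term is of order $n$.
\item Since $\p_\theta\mbbh_n(\tes;\lambda)=0$, a Taylor expansion gives $\mbbh_{n}(\tes+u/\sqrt n;\lambda)-\mbbh_{n}(\tes;\lambda)=-\frac12\mci_n^*[u^{\otimes2}]$ with $\mci_n^*\to\mci(\tz)$ uniformly on $|u|\le\delta\sqrt n$. The integral over this region therefore converges to $\int e^{-\mci(\tz)[u^{\otimes2}]/2}du\,\pi(\tz)=(2\pi)^{p/2}\det\mci(\tz)^{-1/2}\pi(\tz)$, which is a.s. positive and finite.
\end{enumerate}
This step gives $1/Z_n=O_p(1)$ (using continuity and positivity of $\pi$ at $\tz$, Assumption \ref{se:prior}) and a bound on the local part of $Z_n$.

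The main obstacle is the tail region $|\theta-\tz|>\delta$, where we need
\begin{align*}
\int_{|\theta-\tz|>\delta}\exp\{\mbbh_n(\theta;\lambda)-\mbbh_n(\tes;\lambda)\}\pi d\theta=o_p(n^{-p/2}).
\end{align*}
I would argue via the identifiability condition: $n^{-1}(\mbbh_n(\theta;\lambda)-\mbbh_n(\tz;\lambda))\to\mathbb{Y}(\theta)$ uniformly in $\theta$, with $\sup_{|\theta-\tz|>\delta}\mathbb{Y}(\theta)<0$ a.s. (again taken from \cite{EguMas25}). The integrand is then $\le e^{-cn}$ on an event of probability tending to one, and boundedness of $\pi$ and $\Theta$ gives a contribution that is $o(n^{-p/2})$.

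The delicate point is that $\lambda=\lambda_n\to0$ while jumps are present, so the uniform limits must be controlled jointly in $(\theta,\lambda_n)$. This is exactly where Assumption \ref{hm:A_lam} on the speed of $\lambda_n$ is used: it ensures the jump-affected summands, of which there are $O_p(1)$, are downweighted. If only a pointwise limit is available, I would upgrade it to a uniform one through a moment bound on $\sup_\theta|n^{-1}\p_\theta\mbbh_n|$ via Sobolev's inequality, as in the quasi-likelihood analysis of Yoshida.

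The H\"{o}lder case \eqref{se:st_exp_hol} is identical in structure. The one additional check is that $\mbbh^{\flat}_n$ has the same quadratic limit $\mci(\tz)$ after this normalization, which is again supplied by the proof of \cite[Theorem 3.4]{EguMas25}.
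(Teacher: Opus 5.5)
Your proposal is correct and is essentially the paper's argument: the same heating-up substitution strips off the deterministic shifts $nh^{d\lambda/2}/\lambda-n$ and $n/\mathsf{k}_{\lambda}$, and the remaining Laplace approximation rests on exactly the inputs from \cite[Theorem 3.4]{EguMas25} that the paper feeds into Theorem~\ref{hm:thm.qbic}, namely \eqref{appendix.thm1}--\eqref{appendix.thm6}. The only difference is that the paper applies Theorem~\ref{hm:thm.qbic} centred at $\tz$ and then replaces $\mbbh_{n}(\tz;\lambda)$ by $\mbbh_{n}(\tes(\lambda);\lambda)$ at $O_p(1)$ cost, whereas you do the local/tail split by hand centred at $\tes(\lambda)$; in that step the Hessian on $|u|\le\delta\sqrt{n}$ is uniformly positive definite and bounded for small $\delta$ (by \eqref{appendix.thm1} and \eqref{appendix.thm4}) rather than uniformly close to $\mci(\tz)$ as you claim, but this together with pointwise convergence and dominated convergence is all you need.
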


In view of Theorem \ref{se:thm_denbic}, by multiplying both sides by $2$, we obtain the following:
\begin{align*}
2\mathfrak{F}_{n}(h^{d\lambda/2};\lambda)
&=-2\mbbh_{n}\big(\tes(\lambda);\lambda\big)+p\log n+\frac{2nh^{d\lambda/2}}{\lambda}-2n+O_{p}(1).
\end{align*}
Ignoring the $O_{p}(1)$ term, which is asymptotically negligible relative to the other terms, we define the density-power Gaussian quasi-Bayesian information criterion (dpGQBIC) as
\begin{align*}
\betabic_{n}^{\sharp}=-2\mbbh_{n}\big(\tes(\lambda);\lambda\big)+p\log n+2\frac{nh^{d\lambda/2}}{\lambda}-2n.
\end{align*}
To select the optimal coefficient among the candidate models using $\betabic_{n}^{\sharp}$, we compute it for each candidate model with $\lambda$ fixed.
If the same value of $\lambda$ is used for all candidate models when computing $\betabic_{n}^{\sharp}$, the third and fourth terms of $\betabic_{n}^{\sharp}$ are deterministic and common to all candidate models, and hence can be omitted when comparing the values of $\betabic_{n}^{\sharp}$.
This leads to the following simplified criterion, which we refer to as the dpGQBIC:
\begin{align*}
\betabic_{n}=-2\mbbh_{n}\big(\tes(\lambda);\lambda\big)+p\log n.
\end{align*}
Analogously to the dpGQBIC, the H\"{o}lder-based Gaussian quasi-Bayesian information criterion (HGQBIC) is defined as
\begin{align*}
\gambic_{n}=-2\mbbh_{n}^{\flat}\big(\hat{\theta}_{n}^{\flat}(\lambda);\lambda\big)+p\log n.
\end{align*}
Based on the dpGQBIC and HGQBIC, we select the optimal coefficients $\sigma_{\hat{m}_{n}(\lambda)}$ and $\sigma_{\hat{m}_{n}^{\flat}(\lambda)}$ among the candidates by
\begin{align*}
\{\hat{m}_{n}(\lambda)\}
&=\argmin_{m\in\{1,\ldots,M\}} \betabic^{(m)}_{n}, \\
\{\hat{m}_{n}^{\flat}(\lambda)\}
&=\argmin_{m\in\{1,\ldots,M\}} \gambic^{(m)}_{n},
\end{align*}
respectively.
Here, $\betabic^{(m)}_{n}$ and $\gambic^{(m)}_{n}$ denote the dpGQBIC and HGQBIC of $m$-th candidate model, respectively.



\subsection{Asymptotic probability of relative model selection}
In this section, we assume that the candidate coefficients $\sigma_{1},\ldots,\sigma_{M}$ contain both correctly specified coefficients and misspecified coefficients.
We formally use the dpGQBIC and HGQBIC even for the possibly misspecified coefficients.
Moreover, we denote $\mathbb{H}_{n}(\theta;\lambda)$ and $\mathbb{H}_{n}^{\flat}(\theta;\lambda)$ by $\mathbb{H}_{m,n}(\theta_{m};\lambda)$ and $\mathbb{H}_{m,n}^{\flat}(\theta_{m};\lambda)$ in the $m$-th candidate model $\mathcal{M}_{m}$.

Let $\mathfrak{M}$ denote the set of correctly specified models:
\begin{align*}
\mathfrak{M}=\left\{m\in\{1,\ldots,M\}: \text{there exists a } \theta_{m,0}\in\Theta_{m} \text{ such that } S_{m}(\cdot,\theta_{m,0})=S(\cdot)\right\},
\end{align*}
where $S_{m}(x,\theta_{m})=\sigma_{m}^{\otimes 2}(x,\theta_{m})$ and $S(x)=\sigma^{\otimes 2}(x)$.
We assume that the model index $m^{\ast}$ is uniquely determined
\begin{align*}
\{m^{\ast}\}=\argmin_{m\in\mathfrak{M}}\dim(\Theta_{m}).
\end{align*}

For any $m\in\{1,\ldots,M\}$, define 
\begin{align*}
\mathbb{Y}_{m,0}(\theta_{m})
&:=-\frac{1}{2T}\int_{0}^{T}\bigg\{\log\left(\frac{\det S_{m}(X_{t},\theta_{m})}{\det S(X_{t})}\right) \\
&\hspace{25mm}+\trace\left(S_{m}(X_{t},\theta_{m})^{-1}S(X_{t})-I_{d}\right)\bigg\}dt.
\end{align*}
If $m_{t}\in\mathfrak{M}$, then
\begin{align*}
\mathbb{Y}_{m_{t},n}(\theta_{m_{t}};\lambda)
&:=\frac{1}{n}\big(\mbbh_{m_{t},n}(\theta_{m_{t}};\lambda)-\mbbh_{m_{t},n}(\theta_{m_{t},0};\lambda)\big)\cip\mathbb{Y}_{m_{t},0}(\theta_{m_{t}}), \\
\mathbb{Y}_{m_{t},n}^{\flat}(\theta_{m_{t}};\lambda)
&:=\frac{1}{n}\big(\mbbh_{m_{t},n}^{\flat}(\theta_{m_{t}};\lambda)-\mbbh_{m_{t},n}^{\flat}(\theta_{m_{t},0};\lambda)\big)\cip\mathbb{Y}_{m_{t},0}(\theta_{m_{t}})
\end{align*}
uniformly in $\theta_{m_{t}}$ (see Lemma \ref{se:lem1}), and the true parameter $\theta_{m_{t},0}$ satisfies
\begin{align*}
\{\theta_{m_{t},0}\}=\argmax_{\theta_{m_{t}}\in\bar{\Theta}_{m_{t}}}\mathbb{Y}_{m_{t},0}(\theta_{m_{t}}).
\end{align*}
Furthermore, for any $m_{1},m_{2}\in\mathfrak{M}$, the equality $S_{m_{1}}(\cdot,\theta_{m_{1},0})=S_{m_{2}}(\cdot,\theta_{m_{2},0})=S(\cdot)$ implies $\mathbb{Y}_{m_{1},0}(\theta_{m_{1},0})=\mathbb{Y}_{m_{2},0}(\theta_{m_{2},0})$.

Let $\Theta_{m_{1}}\subset\mathbb{R}^{p_{m_{1}}}$ and $\Theta_{m_{2}}\subset\mathbb{R}^{p_{m_{2}}}$ be the parameter spaces associated with $\mathcal{M}_{m_{1}}$ and $\mathcal{M}_{m_{2}}$, respectively.
We say that $\Theta_{m_{1}}$ is {\it nested} in $\Theta_{m_{2}}$ when $p_{m_{1}}<p_{m_{2}}$ and there exists a matrix $F\in\mathbb{R}^{p_{m_{2}}\times p_{m_{1}}}$ with $F^{\top}F=I_{p_{m_{1}}}$ and a constant $c\in\mathbb{R}^{p_{m_{2}}}$ such that $S_{m_{1}}(\cdot,\theta_{m_{1}})=S_{m_{2}}(\cdot,F\theta_{m_{1}}+c)$ for all $\theta_{m_{1}}\in\Theta_{m_{1}}$.

Let $\mathfrak{M}^{c}=\{1,\ldots,M\}\setminus\mathfrak{M}$ denote the set of indices of misspecified models.
The following assumption is required to derive an inequality relationship between the information criteria for the true model and the misspecified model. 
\begin{ass}
\label{se:ass_modconsis}
For any $m_{c}\in\mathfrak{M}^{c}$, we have either
\begin{itemize}
\item[(i)] $\sup_{\theta_{m_{c}}\in\bar{\Theta}_{m_{c}}}\mathbb{Y}_{m_{c},0}(\theta_{m_{c}}) < 0$ a.s., or

\item[(ii)] there exists $\bar{\theta}_{m_{c}}\in\Theta_{m_{c}}$ such that $\hat{\theta}_{m_{c},n}(\lambda)\cip\bar{\theta}_{m_{c}}$, $\hat{\theta}_{m_{c},n}^{\flat}(\lambda)\cip\bar{\theta}_{m_{c}}$ as $n\to\infty$, and $\mathbb{Y}_{m_{c},0}(\bar{\theta}_{m_{c}}) < 0$ a.s.
\end{itemize}
\end{ass}




\begin{thm} \label{se:thm_consis}
Suppose that Assumptions \ref{hm:A_diff.coeff}--\ref{hm:A_lam} and \ref{se:prior} hold for all candidate coefficients which are included in $\mathfrak{M}$.

\begin{itemize}
\item[(i)] Let $m\in\mathfrak{M}\setminus\{m^{\ast}\}$. 
If $\Theta_{m^{\ast}}$ is nested in $\Theta_{m}$, then
\begin{align}
&\lim_{n\to\infty}P\left[\betabic^{(m)}_{n}>\betabic^{(m^{\ast})}_{n}\right]=1, \label{se:model_consis1} \\
&\lim_{n\to\infty}P\left[\gambic^{(m)}_{n}>\gambic^{(m^{\ast})}_{n}\right]=1. \label{se:model_consis2}
\end{align}

\item[(ii)] If Assumption \ref{se:ass_modconsis} holds, then
\begin{align}
&\lim_{n\to\infty}P\left[\min_{m_{c}\in\mathfrak{M}^{c}}\betabic^{(m_{c})}_{n}>\betabic^{(m^{\ast})}_{n}\right]=1, \label{se:model_consis3} \\
&\lim_{n\to\infty}P\left[\min_{m_{c}\in\mathfrak{M}^{c}}\gambic^{(m_{c})}_{n}>\gambic^{(m^{\ast})}_{n}\right]=1. \label{se:model_consis4}
\end{align}
\end{itemize}
\end{thm}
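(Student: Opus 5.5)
We compare the criteria through differences of the maximized quasi-likelihoods; the two cases behave differently. In case (i) these differences are $O_p(1)$, so the penalty gap decides. In case (ii) they are of exact order $n$, and this dominates the $\log n$ penalties.

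\emph{Part (i).} We write
\begin{align*}
\betabic^{(m)}_{n}-\betabic^{(m^{\ast})}_{n}
=-2\big\{\mbbh_{m,n}(\hat{\theta}_{m,n}(\lambda);\lambda)-\mbbh_{m^{\ast},n}(\hat{\theta}_{m^{\ast},n}(\lambda);\lambda)\big\}+(p_{m}-p_{m^{\ast}})\log n .
\end{align*}
Since $p_m-p_{m^\ast}\ge1$, it suffices to show that the bracket is $O_p(1)$.

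By nestedness, $\mbbh_{m^\ast,n}(\theta_{m^\ast};\lambda)=\mbbh_{m,n}(F\theta_{m^\ast}+c;\lambda)$ identically. Hence $\mbbh_{m^\ast,n}(\hat\theta_{m^\ast,n})\le \mbbh_{m,n}(\hat\theta_{m,n})$, and the bracket is nonnegative. For the upper bound, we subtract the common value $\mbbh_{m,n}(\theta_{m,0};\lambda)=\mbbh_{m^\ast,n}(\theta_{m^\ast,0};\lambda)$; here $F\theta_{m^\ast,0}+c$ is a true value for model $m$, and we take it as $\theta_{m,0}$ (this uses identifiability of $\theta_{m,0}$ from the assumptions on model $m$).

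We then bound each term $\mbbh_{k,n}(\hat\theta_{k,n};\lambda)-\mbbh_{k,n}(\theta_{k,0};\lambda)$, for $k=m,m^\ast$, by a second-order Taylor expansion around $\hat\theta_{k,n}$. The score vanishes at $\hat\theta_{k,n}$, an interior point with probability tending to one. The Hessian satisfies $-n^{-1}\p_\theta^2\mbbh_{k,n}\cip \mci_k(\theta_{k,0})$ uniformly near $\theta_{k,0}$, and $\sqrt n(\hat\theta_{k,n}-\theta_{k,0})=O_p(1)$ by \cite[Theorem 3.4]{EguMas25}. Together these give that each term is $O_p(1)$. This is exactly the quadratic approximation already used to prove Theorem \ref{se:thm_denbic}, so we reuse that lemma. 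The HGQBIC case \eqref{se:model_consis2} is identical with $\mbbh^\flat$ in place of $\mbbh$. Thus $P[\text{difference}>0]\ge P[O_p(1)<\log n]\to1$.

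\emph{Part (ii).} Since $M$ is finite, it suffices to fix a single $m_c\in\mathfrak M^c$. We normalize by $n$:
\begin{align*}
\frac1n\big(\betabic^{(m_c)}_n-\betabic^{(m^\ast)}_n\big)
=-\frac2n\big\{\mbbh_{m_c,n}(\hat\theta_{m_c,n})-\mbbh_{m^\ast,n}(\theta_{m^\ast,0})\big\}+o_p(1).
\end{align*}
This uses part (i)'s bound $\mbbh_{m^\ast,n}(\hat\theta_{m^\ast,n})-\mbbh_{m^\ast,n}(\theta_{m^\ast,0})=O_p(1)=o_p(n)$. We then need $\frac1n\{\mbbh_{m_c,n}(\hat\theta_{m_c,n})-\mbbh_{m^\ast,n}(\theta_{m^\ast,0})\}$ to be eventually negative with probability tending to one.

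The key step, and the main obstacle, is extending Lemma \ref{se:lem1} to misspecified models. We must show that $\frac1n\{\mbbh_{m_c,n}(\theta_{m_c};\lambda)-\mbbh_{m^\ast,n}(\theta_{m^\ast,0};\lambda)\}\cip \mathbb Y_{m_c,0}(\theta_{m_c})$ uniformly in $\theta_{m_c}$. We would do this with the same decomposition as in Lemma \ref{se:lem1}. First, jump-affected increments are negligible after the density-power or H\"older weighting, because only $O_p(1)$ increments contain jumps and $\lambda_n$ decays slowly by Assumption \ref{hm:A_lam}. Second, for the remaining increments, conditional expectations given $\mcf_{t_{j-1}}$ are expanded in $\lambda$, using $\nY_j\approx \sigma_{j-1}\ep_j$, and a martingale LLN with Sobolev-type uniformity in $\theta$ is applied. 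Expanding in $\lambda\to0$ turns the Riemann sum $\frac1n\sum_j$ of the limiting Gaussian KL-type contrast into $\mathbb Y_{m_c,0}(\theta_{m_c})$, with errors $O(\lambda_n)$. Only the regularity of $S_{m_c}$, such as nondegeneracy and smoothness, is needed here, and not its correctness; this is where we must make sure the cited assumptions on candidate $m_c$ suffice.

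Given this convergence, we split into the two alternatives of Assumption \ref{se:ass_modconsis}. Under (i), the normalized difference is at most $\sup_\theta \mathbb Y_{m_c,0}+o_p(1)$, which is strictly negative a.s. Under (ii), $\hat\theta_{m_c,n}\cip\bar\theta_{m_c}$, and uniform convergence together with continuity of $\mathbb Y_{m_c,0}$ gives the limit $\mathbb Y_{m_c,0}(\bar\theta_{m_c})<0$ a.s. In both cases $\betabic^{(m_c)}_n-\betabic^{(m^\ast)}_n\ge n(c+o_p(1))-p_{m^\ast}\log n$ for a random $c>0$, so it is positive with probability tending to one. The HGQBIC version \eqref{se:model_consis4} follows verbatim using the $\flat$-part of Lemma \ref{se:lem1}.
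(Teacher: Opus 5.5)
Your proposal is correct and follows essentially the paper's proof. In (i), the nested maximized quasi-likelihood gap is $O_p(1)$ by Taylor expansion, $\sqrt{n}$-consistency, an $O_p(n)$ Hessian bound and $F\theta_{m^\ast,0}+c=\theta_{m,0}$; in (ii), uniform convergence of $\check{\mathbb{Y}}_{m_c,n}$ to $\mathbb{Y}_{m_c,0}$ makes the order-$n$ gap dominate the $\log n$ penalties under either alternative of Assumption \ref{se:ass_modconsis}. That convergence is exactly the second half of Lemma \ref{se:lem1}, so the ``extension'' you flag as the main obstacle is already part of that lemma. The remaining differences are cosmetic: you route (i) through the common true value $\mathbb{H}_{m,n}(\theta_{m,0};\lambda)=\mathbb{H}_{m^\ast,n}(\theta_{m^\ast,0};\lambda)$ instead of expanding $\mathbb{H}_{m,n}$ around $\hat{\theta}_{m,n}(\lambda)$ at $f_m(\hat{\theta}_{m^\ast,n}(\lambda))$, and in (ii) you use uniform convergence plus continuity of $\mathbb{Y}_{m_c,0}$ directly rather than the recentred Lemma \ref{se:lem2}.
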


Theorem \ref{se:thm_consis} (i) shows that the probability of selecting the correctly specified model with the smallest dimension converges to $1$ as $n\to\infty$.
Moreover, Theorem \ref{se:thm_consis} (ii) implies that the probability of choosing any misspecified model converges to $0$ as $n\to\infty$.

\begin{rem}
\label{hm:rem_fixed.lam}
We are considering dpGQBIC and HGQBIC under the condition $\lam_n\to 0$ (see Assumption \ref{hm:A_lam}).
However, as in \cite{EguMas25}, it is possible to show that the similar claims of Theorems \ref{se:thm_denbic} and \ref{se:thm_consis} remain valid even when $\lam$ is a fixed positive constant. 
\end{rem}

\section{Numerical experiments} \label{sec:simu}

In this section, we present simulation results to observe the finite-sample performances of the density-power GQBIC and H\"{o}lder-based GQBIC.
We use the {\tt yuima} package in R (see \cite{YUIMA14}) to generate data.
All Monte Carlo trials are based on 1000 independent sample paths, and simulations are done for $\lambda=0.01$, $0.05$, $0.2$, and $n=100$, $500$, $1000$ with $T=1$.
In the following simulations, $w$ is a one-dimensional standard Wiener process, $J$ is a compound Poisson process with intensity $q$, and the distribution representing the jump-size of the compound Poisson process is given by $N(0,3)$.
Moreover, we compare the model selection frequencies through dpGQBIC, HGQBIC, and GQBIC.
The dpGQBIC, HGQBIC, and GQBIC of the $m$-th candidate model are given by
\begin{align*}
\mathrm{dpGQBIC}^{(m)}_{n}&=-2\mbbh_{m,n}\big(\hat{\theta}_{m,n}(\lambda);\lambda\big)+p_{m}\log n, \\
\mathrm{HGQBIC}^{(m)}_{n}&=-2\mbbh_{m,n}^{\flat}
\big(\hat{\theta}_{m,n}^{\flat}(\lambda);\lambda\big)+p_{m}\log n, \\
\mathrm{GQBIC}^{(m)}_{n}&=-2\mbbh_{m,n}(\hat{\theta}_{m,n})+p_{m}\log n,
\end{align*}
respectively.
Here, $\mbbh_{m,n}(\theta_{m})$ and $\hat{\theta}_{m,n}$ are the GQLF and GQMLE of $m$-th candidate model, respectively. 

\subsection{Time-inhomogeneous Wiener process} \label{se:simu1}
Let $(X_{t_{j}},Y_{t_{j}})_{j=0}^{n}$ be a data set with $t_{j}=j/n$.
Suppose that we have the sample data $(X_{t_{j}},Y_{t_{j}})_{j=0}^{n}$ from the true model
\begin{align*}
dY_{t}&=\exp\left\{\frac{1}{2}X_{t}\left(\begin{array}{c} -2 \\ 3 \\ 0 \end{array}\right)\right\}dw_{t}+dJ_{t}=\exp\left\{\frac{1}{2}(-2X_{1,t}+3X_{2,t})\right\}dw_{t}+dJ_{t}, \\
X_{t_{j}}&=(X_{1,t_{j}},X_{2,t_{j}},X_{3,t_{j}})=\left(\cos\left(\frac{2j\pi}{n}\right),\sin\left(\frac{2j\pi}{n}\right),\cos\left(\frac{4j\pi}{n}\right)\right), \\
Y_{0}&=0, \quad t\in[0,1].
\end{align*}
The simulations are performed for $q=0.01n$ and $0.1n$, corresponding to cases where the jump intensity increases with the sample size $n$, and for $q=10$, corresponding to the case where the jump intensity does not depend on $n$.
Moreover, the simulation result for $q=0$, corresponding to the case of no jumps, is provided in Appendix \ref{se:app2}.
Figure \ref{pathplot1} shows one of 1000 sample paths for each sample size in $q=0.01n$.
We consider the following candidate diffusion coefficients:
\begin{align*}
&\textbf{Diff 1}: \; \sigma_{1}(x,\theta_{1})=\exp\left\{\frac{1}{2}(\theta_{11}X_{1,t}+\theta_{12}X_{2,t}+\theta_{13}X_{3,t})\right\}; \\
&\textbf{Diff 2}: \; \sigma_{2}(x,\theta_{2})=\exp\left\{\frac{1}{2}(\theta_{21}X_{1,t}+\theta_{22}X_{2,t})\right\}; \\
&\textbf{Diff 3}: \; \sigma_{3}(x,\theta_{3})=\exp\left\{\frac{1}{2}(\theta_{31}X_{1,t}+\theta_{33}X_{3,t})\right\}; \\
&\textbf{Diff 4}: \; \sigma_{4}(x,\theta_{4})=\exp\left\{\frac{1}{2}(\theta_{42}X_{2,t}+\theta_{43}X_{3,t})\right\}; \\
&\textbf{Diff 5}: \; \sigma_{5}(x,\theta_{5})=\exp\left\{\frac{1}{2}(\theta_{51}X_{1,t})\right\}; \\
&\textbf{Diff 6}: \; \sigma_{6}(x,\theta_{6})=\exp\left\{\frac{1}{2}(\theta_{62}X_{2,t})\right\}; \\
&\textbf{Diff 7}: \; \sigma_{7}(x,\theta_{7})=\exp\left\{\frac{1}{2}(\theta_{73}X_{3,t})\right\}.
\end{align*}
The $m$-th candidate model is described by
\begin{align*}
dY_{t}=\sigma_{m}(X_{t},\theta_{m})dw_{t}.
\end{align*}
The true coefficient corresponds to Diff 2 with $(\theta_{21},\theta_{22})=(-2,3)$, and Diff 1 contains the true coefficient.

Let $\theta_{2,0}=(\theta_{21,0},\theta_{22,0})=(-2,3)$. Figures \ref{boxplot1}-- \ref{boxplot13} show the boxplots of $\hat{\theta}_{2,n}(\lambda)-\theta_{2,0}$ and $\hat{\theta}_{2,n}^{\flat}(\lambda)-\theta_{2,0}$ for each $\lambda$ with $n=500$ in Diff 2. 
The estimators $\hat{\theta}_{2,n}(0)$ and $\hat{\theta}_{2,n}^{\flat}(0)$ mean the GQMLE $\hat{\theta}_{2,n}$.
From these figures, both density-power and H\"{o}lder-based GQMLEs with $\lambda=0.2$ perform better than those with other values of $\lambda$.

Tables \ref{candsimu1}--\ref{candsimu13} summarize the model selection frequencies.
The GQBIC frequently selects Diff 1, which corresponds to a larger coefficient than the true one, and the selection frequency of Diff 2 under the GQBIC does not increase with $n$. 
That is, these results do not support the model selection consistency of the GQBIC.
The selection frequency of Diff 1 under the dpGQBIC increases as $\lambda$ decreases; on the other hand, the selection frequency of Diff 6, which corresponds to a smaller coefficient than the true one, under the HGQBIC increases as $\lambda$ decreases.
In the cases where $q=0.01n$ and $0.1n$, both the dpGQBIC and the HGQBIC tend to select Diff 2 more frequently as $n$ increases for $\lambda=0.05$ and $0.2$.
Moreover, the HGQBIC shows the same tendency for $\lambda=0.01$, although the dpGQBIC selects Diff 1 with high frequency for $\lambda=0.01$.
In the case where $q=10$, the selection frequencies of Diff 2 under both criteria also increase with $n$ for all $\lambda$.

\begin{figure}[t]
\begin{tabular}{c}

\begin{minipage}{0.31 \hsize}
\begin{center}
\includegraphics[scale=0.25]{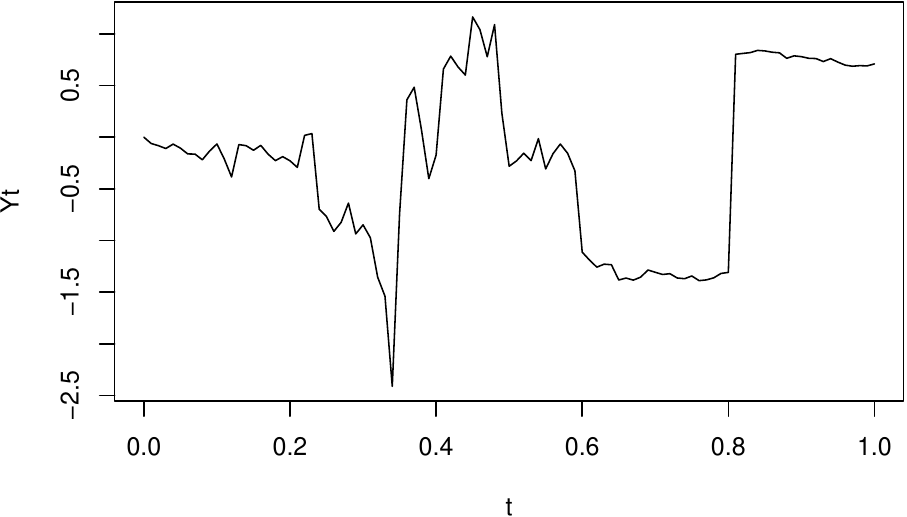}
\end{center}
\end{minipage}

\begin{minipage}{0.31 \hsize}
\begin{center}
\includegraphics[scale=0.25]{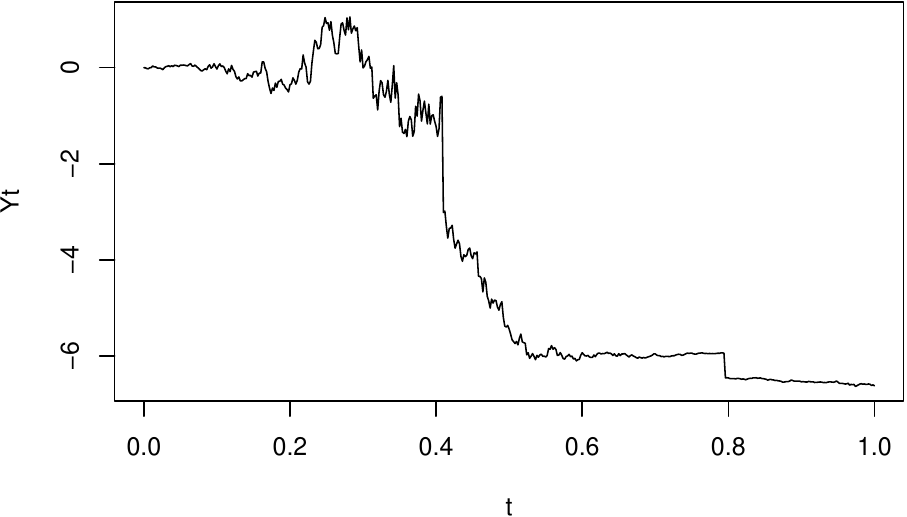}
\end{center}
\end{minipage}

\begin{minipage}{0.31 \hsize}
\begin{center}
\includegraphics[scale=0.25]{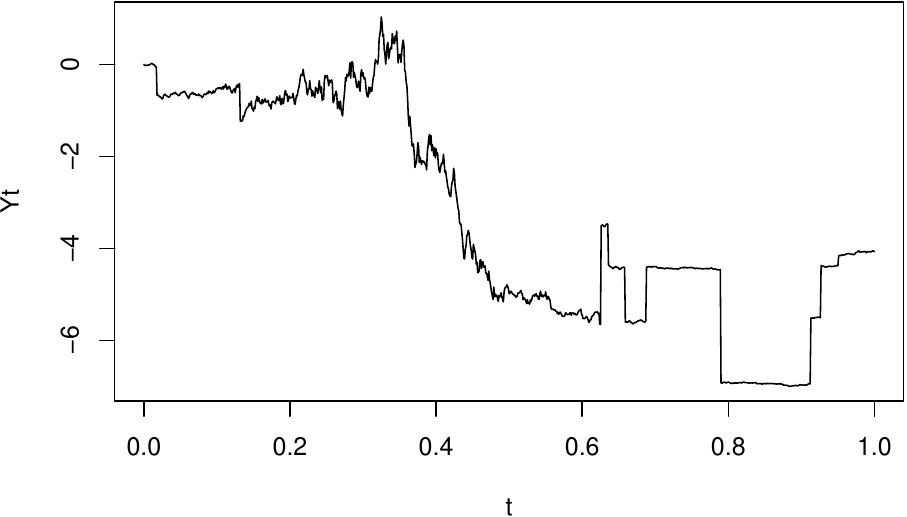}
\end{center}
\end{minipage}

\end{tabular}
\caption{One of 1000 sample paths in Section \ref{se:simu1} (left: $n=100$, center: $n=500$, right: $n=1000$).}
\label{pathplot1}
\end{figure}

\begin{figure}[t]
\begin{tabular}{c}

\begin{minipage}{0.45 \hsize}
\begin{center}
\includegraphics[scale=0.38]{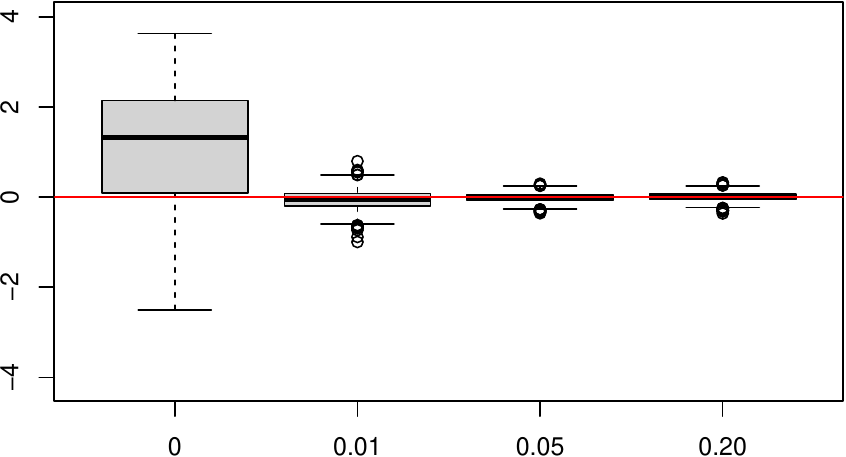}
\ \\
$\hat{\theta}_{21,n}(\lambda)-\theta_{21,0}$
\end{center}
\end{minipage}

\begin{minipage}{0.45 \hsize}
\begin{center}
\includegraphics[scale=0.38]{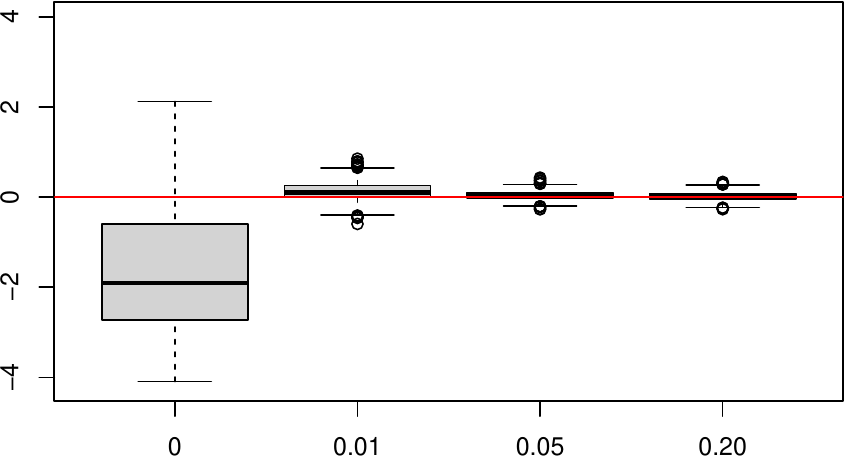}
\ \\
$\hat{\theta}_{22,n}(\lambda)-\theta_{22,0}$
\end{center}
\end{minipage}

\ \\
\ \\

\begin{minipage}{0.45 \hsize}
\begin{center}
\includegraphics[scale=0.38]{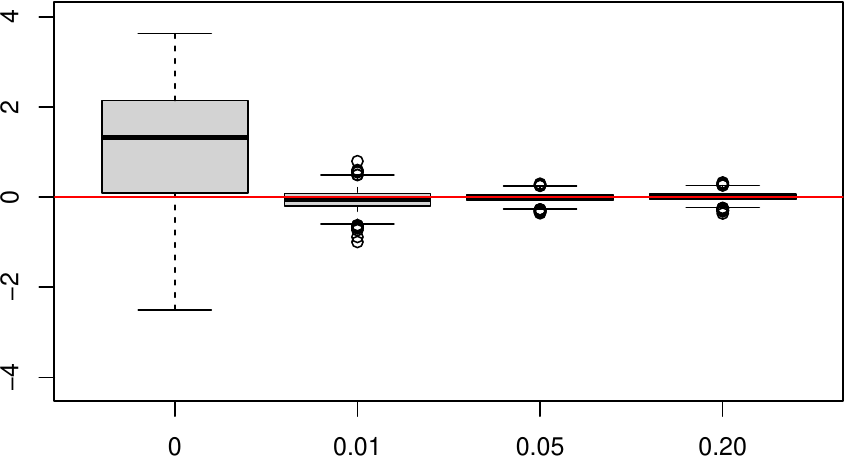}
\ \\
$\hat{\theta}_{21,n}^{\flat}(\lambda)-\theta_{21,0}$
\end{center}
\end{minipage}

\begin{minipage}{0.45 \hsize}
\begin{center}
\includegraphics[scale=0.38]{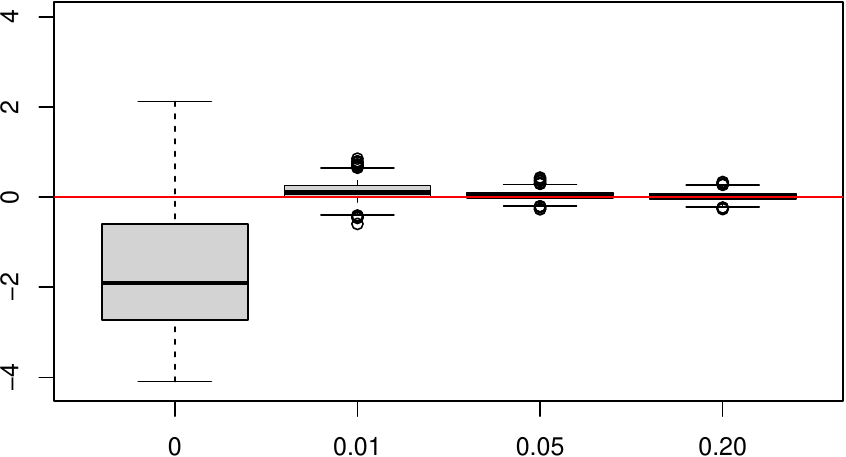}
\ \\
$\hat{\theta}_{22,n}^{\flat}(\lambda)-\theta_{22,0}$
\end{center}
\end{minipage}

\end{tabular}
\caption{Boxplots of $\hat{\theta}_{2,n}(\lambda)-\theta_{2,0}$ and $\hat{\theta}_{2,n}^{\flat}(\lambda)-\theta_{2,0}$ for each $\lambda$ when the candidate coefficient is Diff 2 in Section \ref{se:simu1} ($q=0.01n$, $n=500$).}
\label{boxplot1}
\end{figure}


\begin{figure}[t]
\begin{tabular}{c}

\begin{minipage}{0.45 \hsize}
\begin{center}
\includegraphics[scale=0.38]{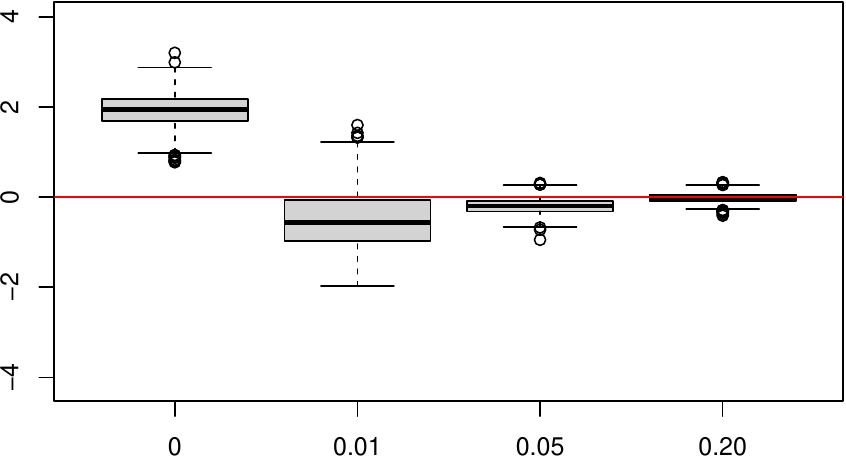}
\ \\
$\hat{\theta}_{21,n}(\lambda)-\theta_{21,0}$
\end{center}
\end{minipage}

\begin{minipage}{0.45 \hsize}
\begin{center}
\includegraphics[scale=0.38]{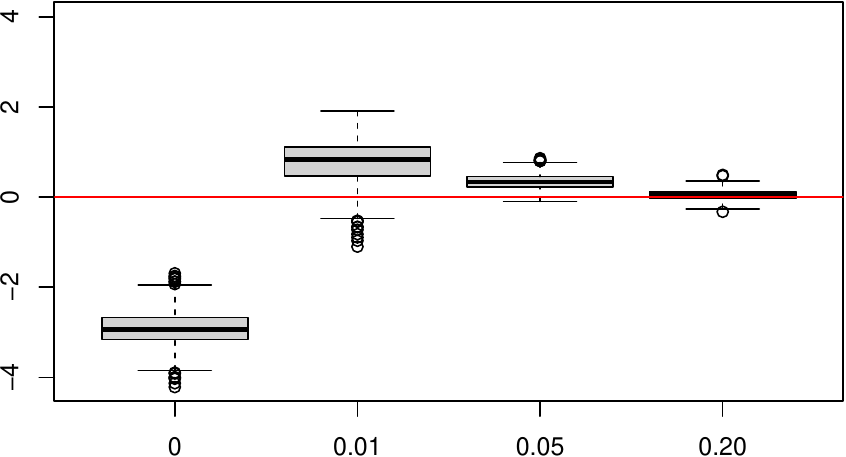}
\ \\
$\hat{\theta}_{22,n}(\lambda)-\theta_{22,0}$
\end{center}
\end{minipage}

\ \\
\ \\

\begin{minipage}{0.45 \hsize}
\begin{center}
\includegraphics[scale=0.38]{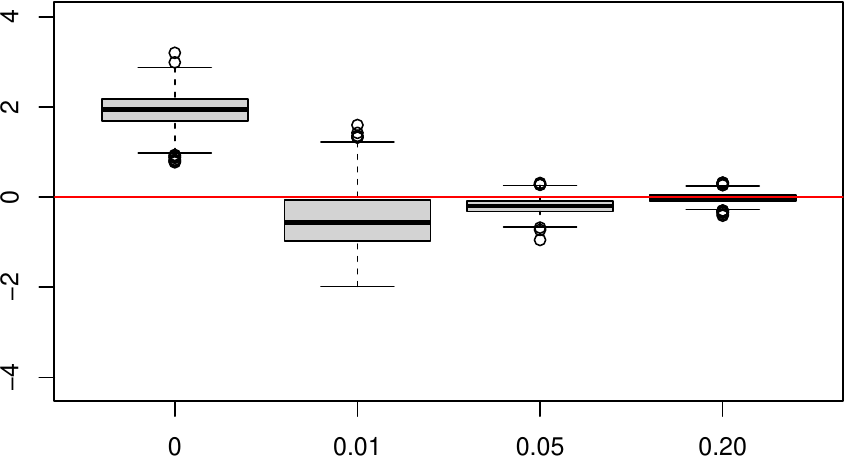}
\ \\
$\hat{\theta}_{21,n}^{\flat}(\lambda)-\theta_{21,0}$
\end{center}
\end{minipage}

\begin{minipage}{0.45 \hsize}
\begin{center}
\includegraphics[scale=0.38]{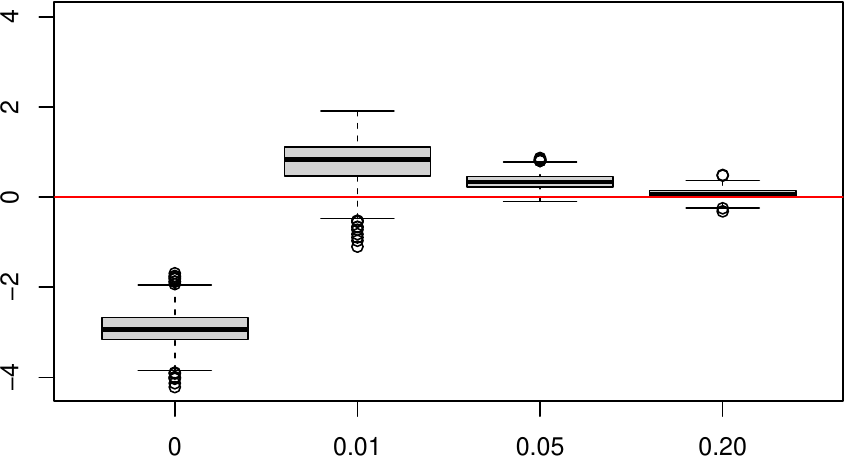}
\ \\
$\hat{\theta}_{22,n}^{\flat}(\lambda)-\theta_{22,0}$
\end{center}
\end{minipage}

\end{tabular}
\caption{Boxplots of $\hat{\theta}_{2,n}(\lambda)-\theta_{2,0}$ and $\hat{\theta}_{2,n}^{\flat}(\lambda)-\theta_{2,0}$ for each $\lambda$ when the candidate coefficient is Diff 2 in Section \ref{se:simu1} ($q=0.1n$, $n=500$).}
\label{boxplot12}
\end{figure}

\begin{figure}[t]
\begin{tabular}{c}

\begin{minipage}{0.45 \hsize}
\begin{center}
\includegraphics[scale=0.38]{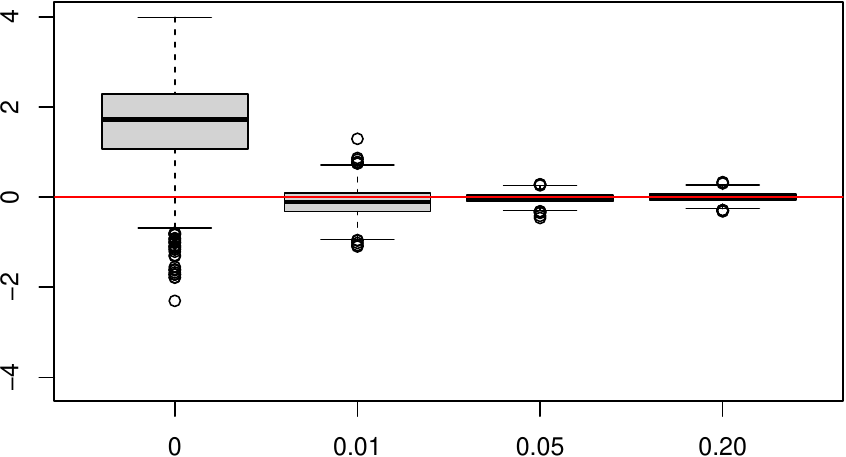}
\ \\
$\hat{\theta}_{21,n}(\lambda)-\theta_{21,0}$
\end{center}
\end{minipage}

\begin{minipage}{0.45 \hsize}
\begin{center}
\includegraphics[scale=0.38]{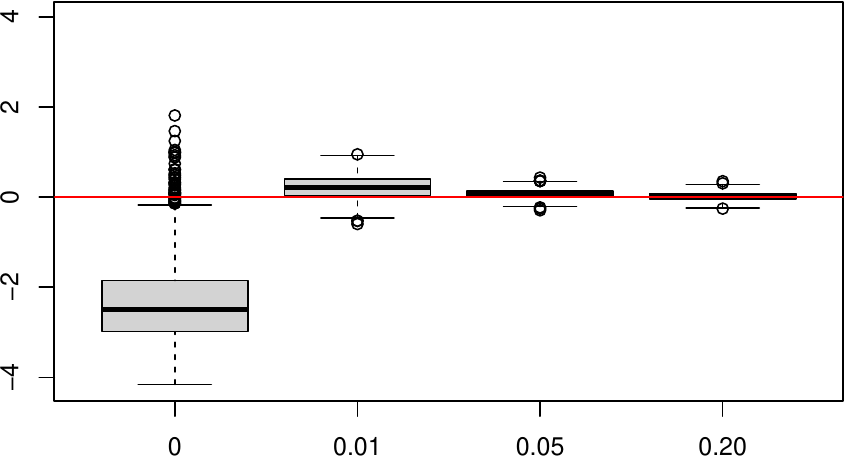}
\ \\
$\hat{\theta}_{22,n}(\lambda)-\theta_{22,0}$
\end{center}
\end{minipage}

\ \\
\ \\

\begin{minipage}{0.45 \hsize}
\begin{center}
\includegraphics[scale=0.38]{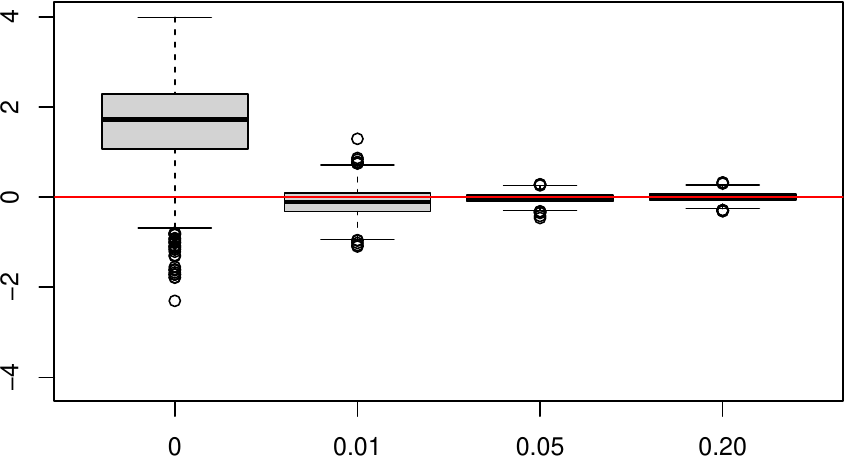}
\ \\
$\hat{\theta}_{21,n}^{\flat}(\lambda)-\theta_{21,0}$
\end{center}
\end{minipage}

\begin{minipage}{0.45 \hsize}
\begin{center}
\includegraphics[scale=0.38]{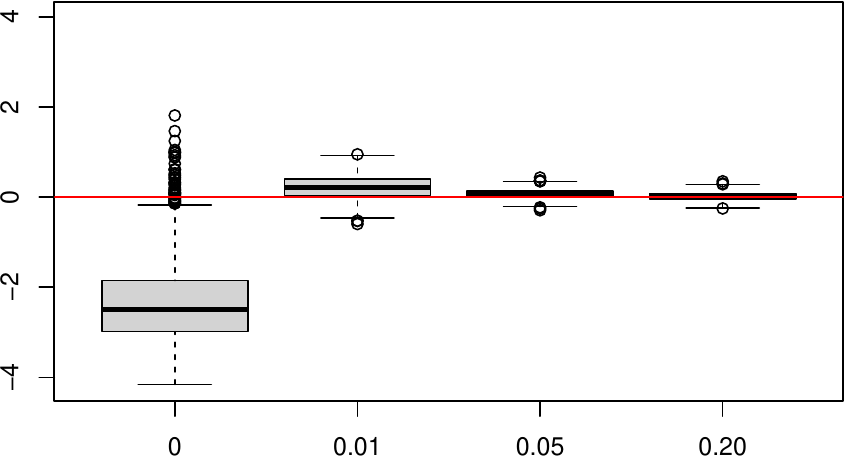}
\ \\
$\hat{\theta}_{22,n}^{\flat}(\lambda)-\theta_{22,0}$
\end{center}
\end{minipage}

\end{tabular}
\caption{Boxplots of $\hat{\theta}_{2,n}(\lambda)-\theta_{2,0}$ and $\hat{\theta}_{2,n}^{\flat}(\lambda)-\theta_{2,0}$ for each $\lambda$ when the candidate coefficient is Diff 2 in Section \ref{se:simu1} ($q=10$, $n=500$).}
\label{boxplot13}
\end{figure}

\begin{table}[t]
\begin{center}
\caption{Model selection frequencies for various situations in Section \ref{se:simu1} ($q=0.01n$).}
\scalebox{0.9}[0.9]{
\begin{tabular}{l l | r r r r r r r} \hline
& & \multicolumn{7}{c}{} \\[-3mm]
dpGQBIC & $n$ & Diff 1 & Diff $2^{\ast}$ & Diff 3 & Diff 4 & Diff 5 & Diff 6 & Diff 7 \\[1mm] \hline
& & \multicolumn{7}{c}{} \\[-3mm]
$\lambda=0.01$ & $100$ & 239 & 742 & 0 & 19 & 0 & 0 & 0 \\[1mm]
& $500$ & 304 & 696 & 0 & 0 & 0 & 0 & 0 \\[1mm]
& $1000$ & 332 & 668 & 0 & 0 & 0 & 0 & 0 \\[1mm] \hline
& & \multicolumn{7}{c}{} \\[-3mm]
$\lambda=0.05$ & $100$ & 39 & 961 & 0 & 0 & 0 & 0 & 0 \\[1mm]
& $500$ & 14 & 986 & 0 & 0 & 0 & 0 & 0 \\[1mm]
& $1000$ & 3 & 997 & 0 & 0 & 0 & 0 & 0 \\[1mm] \hline
& & \multicolumn{7}{c}{} \\[-3mm]
$\lambda=0.2$ & $100$ & 6 & 994 & 0 & 0 & 0 & 0 & 0 \\[1mm]
& $500$ & 1 & 999 & 0 & 0 & 0 & 0 & 0 \\[1mm]
& $1000$ & 0 & 1000 & 0 & 0 & 0 & 0 & 0 \\[1mm] \hline
& & \multicolumn{7}{c}{} \\[-3mm]
HGQBIC & $n$ & Diff 1 & Diff $2^{\ast}$ & Diff 3 & Diff 4 & Diff 5 & Diff 6 & Diff 7 \\[1mm] \hline
& & \multicolumn{7}{c}{} \\[-3mm]
$\lambda=0.01$ & $100$ & 0 & 0 & 0 & 0 & 26 & 974 & 0 \\[1mm]
& $500$ & 0 & 471 & 0 & 0 & 0 & 529 & 0 \\[1mm]
& $1000$ & 0 & 999 & 0 & 0 & 0 & 1 & 0 \\[1mm] \hline
& & \multicolumn{7}{c}{} \\[-3mm]
$\lambda=0.05$ & $100$ & 0 & 425 & 0 & 0 & 0 & 575 & 0 \\[1mm]
& $500$ & 0 & 1000 & 0 & 0 & 0 & 0 & 0 \\[1mm]
& $1000$ & 0 & 1000 & 0 & 0 & 0 & 0 & 0 \\[1mm] \hline
& & \multicolumn{7}{c}{} \\[-3mm]
$\lambda=0.2$ & $100$ & 0 & 992 & 0 & 0 & 0 & 8 & 0 \\[1mm]
& $500$ & 0 & 1000 & 0 & 0 & 0 & 0 & 0 \\[1mm]
& $1000$ & 0 & 1000 & 0 & 0 & 0 & 0 & 0 \\[1mm] \hline
& & \multicolumn{7}{c}{} \\[-3mm]
GQBIC & $n$ & Diff 1 & Diff $2^{\ast}$ & Diff 3 & Diff 4 & Diff 5 & Diff 6 & Diff 7 \\[1mm] \hline
& & \multicolumn{7}{c}{} \\[-3mm]
& $100$ & 352 & 597 & 15 & 33 & 1 & 1 & 1 \\[1mm]
& $500$ & 840 & 94 & 27 & 33 & 2 & 3 & 1 \\[1mm]
& $1000$ & 908 & 40 & 23 & 26 & 1 & 2 & 0 \\[1mm] \hline
\end{tabular}
}
\label{candsimu1}
\end{center}
\end{table}

\begin{table}[t]
\begin{center}
\caption{Model selection frequencies for various situations in Section \ref{se:simu1} ($q=0.1n$).}
\scalebox{0.9}[0.9]{
\begin{tabular}{l l | r r r r r r r} \hline
& & \multicolumn{7}{c}{} \\[-3mm]
dpGQBIC & $n$ & Diff 1 & Diff $2^{\ast}$ & Diff 3 & Diff 4 & Diff 5 & Diff 6 & Diff 7 \\[1mm] \hline
& & \multicolumn{7}{c}{} \\[-3mm]
$\lambda=0.01$ & $100$ & 626 & 208 & 29 & 133 & 0 & 4 & 0 \\[1mm]
& $500$ & 702 & 287 & 0 & 11 & 0 & 0 & 0 \\[1mm]
& $1000$ & 725 & 275 & 0 & 0 & 0 & 0 & 0 \\[1mm] \hline
& & \multicolumn{7}{c}{} \\[-3mm]
$\lambda=0.05$ & $100$ & 235 & 764 & 0 & 1 & 0 & 0 & 0 \\[1mm]
& $500$ & 218 & 782 & 0 & 0 & 0 & 0 & 0 \\[1mm]
& $1000$ & 190 & 810 & 0 & 0 & 0 & 0 & 0 \\[1mm] \hline
& & \multicolumn{7}{c}{} \\[-3mm]
$\lambda=0.2$ & $100$ & 16 & 984 & 0 & 0 & 0 & 0 & 0 \\[1mm]
& $500$ & 6 & 994 & 0 & 0 & 0 & 0 & 0 \\[1mm]
& $1000$ & 0 & 1000 & 0 & 0 & 0 & 0 & 0 \\[1mm] \hline
& & \multicolumn{7}{c}{} \\[-3mm]
HGQBIC & $n$ & Diff 1 & Diff $2^{\ast}$ & Diff 3 & Diff 4 & Diff 5 & Diff 6 & Diff 7 \\[1mm] \hline
& & \multicolumn{7}{c}{} \\[-3mm]
$\lambda=0.01$ & $100$ & 0 & 10 & 1 & 1 & 176 & 799 & 13 \\[1mm]
& $500$ & 0 & 669 & 0 & 4 & 0 & 327 & 0 \\[1mm]
& $1000$ & 0 & 996 & 0 & 0 & 0 & 4 & 0 \\[1mm] \hline
& & \multicolumn{7}{c}{} \\[-3mm]
$\lambda=0.05$ & $100$ & 0 & 466 & 0 & 0 & 0 & 534 & 0 \\[1mm]
& $500$ & 0 & 1000 & 0 & 0 & 0 & 0 & 0 \\[1mm]
& $1000$ & 0 & 1000 & 0 & 0 & 0 & 0 & 0 \\[1mm] \hline
& & \multicolumn{7}{c}{} \\[-3mm]
$\lambda=0.2$ & $100$ & 0 & 970 & 0 & 0 & 0 & 30 & 0 \\[1mm]
& $500$ & 0 & 1000 & 0 & 0 & 0 & 0 & 0 \\[1mm]
& $1000$ & 0 & 1000 & 0 & 0 & 0 & 0 & 0 \\[1mm] \hline
& & \multicolumn{7}{c}{} \\[-3mm]
GQBIC & $n$ & Diff 1 & Diff $2^{\ast}$ & Diff 3 & Diff 4 & Diff 5 & Diff 6 & Diff 7 \\[1mm] \hline
& & \multicolumn{7}{c}{} \\[-3mm]
& $100$ & 780 & 77 & 57 & 68 & 4 & 8 & 6 \\[1mm]
& $500$ & 872 & 36 & 45 & 41 & 1 & 3 & 2 \\[1mm]
& $1000$ & 913 & 29 & 32 & 22 & 0 & 3 & 1 \\[1mm] \hline
\end{tabular}
}
\label{candsimu12}
\end{center}
\end{table}

\begin{table}[t]
\begin{center}
\caption{Model selection frequencies for various situations in Section \ref{se:simu1} ($q=10$).}
\scalebox{0.9}[0.9]{
\begin{tabular}{l l | r r r r r r r} \hline
& & \multicolumn{7}{c}{} \\[-3mm]
dpGQBIC & $n$ & Diff 1 & Diff $2^{\ast}$ & Diff 3 & Diff 4 & Diff 5 & Diff 6 & Diff 7 \\[1mm] \hline
& & \multicolumn{7}{c}{} \\[-3mm]
$\lambda=0.01$ & $100$ & 626 & 208 & 29 & 133 & 0 & 4 & 0 \\[1mm]
& $500$ & 440 & 560 & 0 & 0 & 0 & 0 & 0 \\[1mm]
& $1000$ & 332 & 668 & 0 & 0 & 0 & 0 & 0 \\[1mm] \hline
& & \multicolumn{7}{c}{} \\[-3mm]
$\lambda=0.05$ & $100$ & 235 & 764 & 0 & 1 & 0 & 0 & 0 \\[1mm]
& $500$ & 24 & 976 & 0 & 0 & 0 & 0 & 0 \\[1mm]
& $1000$ & 3 & 997 & 0 & 0 & 0 & 0 & 0 \\[1mm] \hline
& & \multicolumn{7}{c}{} \\[-3mm]
$\lambda=0.2$ & $100$ & 16 & 984 & 0 & 0 & 0 & 0 & 0 \\[1mm]
& $500$ & 0 & 1000 & 0 & 0 & 0 & 0 & 0 \\[1mm]
& $1000$ & 0 & 1000 & 0 & 0 & 0 & 0 & 0 \\[1mm] \hline
& & \multicolumn{7}{c}{} \\[-3mm]
HGQBIC & $n$ & Diff 1 & Diff $2^{\ast}$ & Diff 3 & Diff 4 & Diff 5 & Diff 6 & Diff 7 \\[1mm] \hline
& & \multicolumn{7}{c}{} \\[-3mm]
$\lambda=0.01$ & $100$ & 0 & 10 & 1 & 1 & 176 & 799 & 13 \\[1mm]
& $500$ & 0 & 480 & 0 & 0 & 0 & 520 & 0 \\[1mm]
& $1000$ & 0 & 999 & 0 & 0 & 0 & 1 & 0 \\[1mm] \hline
& & \multicolumn{7}{c}{} \\[-3mm]
$\lambda=0.05$ & $100$ & 0 & 466 & 0 & 0 & 0 & 534 & 0 \\[1mm]
& $500$ & 0 & 1000 & 0 & 0 & 0 & 0 & 0 \\[1mm]
& $1000$ & 0 & 1000 & 0 & 0 & 0 & 0 & 0 \\[1mm] \hline
& & \multicolumn{7}{c}{} \\[-3mm]
$\lambda=0.2$ & $100$ & 0 & 970 & 0 & 0 & 0 & 30 & 0 \\[1mm]
& $500$ & 0 & 1000 & 0 & 0 & 0 & 0 & 0 \\[1mm]
& $1000$ & 0 & 1000 & 0 & 0 & 0 & 0 & 0 \\[1mm] \hline
& & \multicolumn{7}{c}{} \\[-3mm]
GQBIC & $n$ & Diff 1 & Diff $2^{\ast}$ & Diff 3 & Diff 4 & Diff 5 & Diff 6 & Diff 7 \\[1mm] \hline
& & \multicolumn{7}{c}{} \\[-3mm]
& $100$ & 780 & 77 & 57 & 68 & 4 & 8 & 6 \\[1mm]
& $500$ & 889 & 42 & 26 & 35 & 1 & 5 & 2 \\[1mm]
& $1000$ & 908 & 40 & 23 & 26 & 1 & 2 & 0 \\[1mm] \hline
\end{tabular}
}
\label{candsimu13}
\end{center}
\end{table}

\subsection{Jump-diffusion process} \label{se:simu2}
The sample data $(Y_{t_{j}})_{j=0}^{n}$ with $t_{j}=j/n$ is obtained from 
\begin{align*}
dY_{t}=Y_{t}dt+\frac{2+3Y_{t}^{2}}{1+Y_{t}^{2}}dw_{t}+dJ_{t}, \quad Y_{0}=0, \quad t\in[0,1].
\end{align*}
The simulations are performed for $q=0.01n$ and $10$.
Figure \ref{pathplot2} shows one of 1000 sample paths for each sample size in $q=0.01n$.
We consider the following candidate diffusion coefficients:
\begin{align*}
&\textbf{Diff 1}: \; \sigma_{1}(y,\theta_{1})=\frac{\theta_{11}+\theta_{12}y+\theta_{13}y^{2}}{1+y^2}; \\
&\textbf{Diff 2}: \; \sigma_{2}(y,\theta_{2})=\frac{\theta_{21}+\theta_{23}y^{2}}{1+y^2}; \\
&\textbf{Diff 3}: \; \sigma_{3}(y,\theta_{3})=\frac{\theta_{31}}{1+y^2}.
\end{align*}
The $m$-th candidate model is described by
\begin{align*}
dY_{t}=\sigma_{m}(Y_{t},\theta_{m})dw_{t}.
\end{align*}
The true coefficient corresponds to Diff 2 with $(\theta_{21},\theta_{22})=(2,3)$, and Diff 1 contains the true coefficient.

Table \ref{candsimu3} summarizes the frequency of model selection.
The GQBIC frequently selects Diff 2; however, its selection frequency for Diff 2 does not increase with $n$. 
Moreover, both the dpGQBIC and the HGQBIC exhibit the same behavior as in Section \ref{se:simu1}.

\begin{rem}
The choice of the tuning parameter $\lambda$ is important, and several studies have investigated the selection of $\lambda$ for parameter estimation; see, for example, \cite{WarJon05, BasBasJon21}.
In the literature on density-power divergence-based estimation, values around $0.2$ are often used for $\lambda$ because they provide a good balance between efficiency and robustness.
Similarly, in the numerical experiments on model selection presented in this paper, $\lambda=0.2$ yields the best performance.
Therefore, $\lambda=0.2$ may serve as a reasonable default or starting value in practice, although the optimal value may depend on the configuration considered.
\end{rem}

\begin{figure}[t]
\begin{tabular}{c}

\begin{minipage}{0.31 \hsize}
\begin{center}
\includegraphics[scale=0.25]{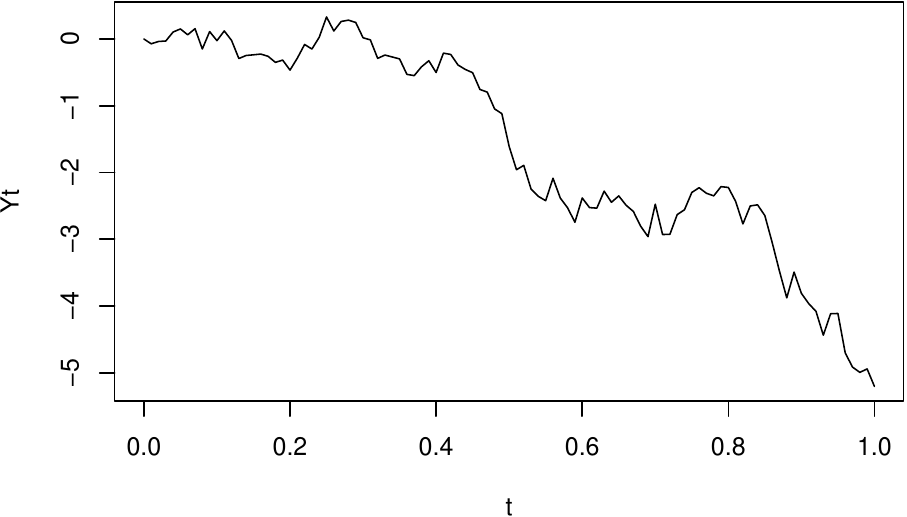}
\end{center}
\end{minipage}

\begin{minipage}{0.31 \hsize}
\begin{center}
\includegraphics[scale=0.25]{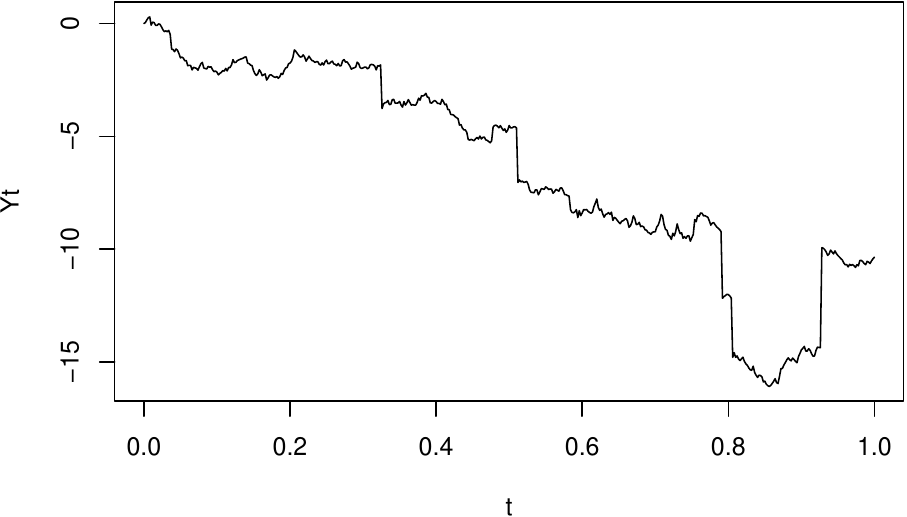}
\end{center}
\end{minipage}

\begin{minipage}{0.31 \hsize}
\begin{center}
\includegraphics[scale=0.25]{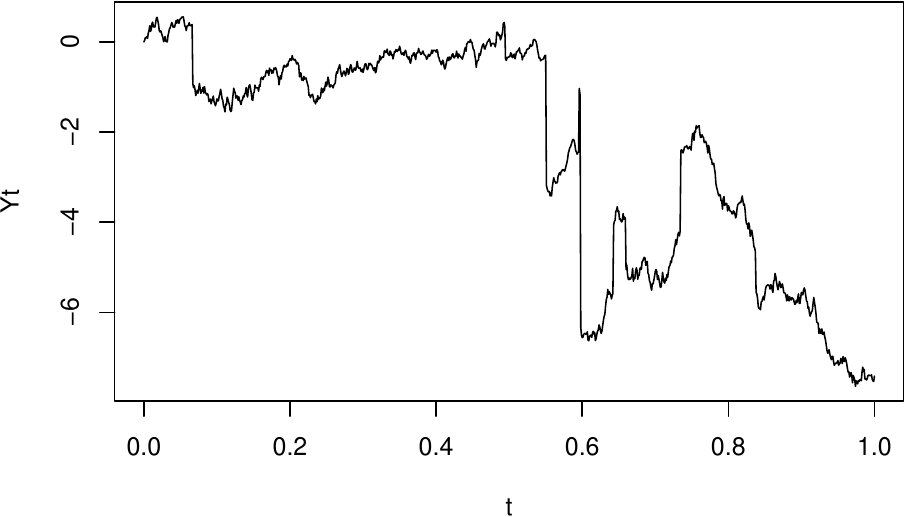}
\end{center}
\end{minipage}

\end{tabular}
\caption{One of 1000 sample paths in Section \ref{se:simu2} (left: $n=100$, center: $n=500$, right: $n=1000$).}
\label{pathplot2}
\end{figure}

\begin{table}[t]
\begin{center}
\caption{Model selection frequencies for various situations in Section \ref{se:simu2}.}
\scalebox{0.9}[0.9]{
\begin{tabular}{l l | r r r | r r r} \hline
& & \multicolumn{3}{c|}{} & \multicolumn{3}{c}{} \\[-3mm]
& & \multicolumn{3}{l|}{$q=0.01n$} & \multicolumn{3}{l}{$q=10$} \\[1mm] \hline
& & \multicolumn{3}{c|}{} & \multicolumn{3}{c}{} \\[-3mm]
dpGQBIC & $n$ & Diff 1 & Diff $2^{\ast}$ & Diff 3 & Diff 1 & Diff $2^{\ast}$ & Diff 3 \\[1mm] \hline
& & \multicolumn{3}{c|}{} & \multicolumn{3}{c}{} \\[-3mm]
$\lambda=0.01$ & $100$ & 94 & 899 & 7 & 215 & 785 & 0 \\[1mm]
& $500$ & 265 & 735 & 0 & 279 & 721 & 0 \\[1mm]
& $1000$ & 280 & 720 & 0 & 280 & 720 & 0 \\[1mm] \hline
& & \multicolumn{3}{c|}{} & \multicolumn{3}{c}{} \\[-3mm]
$\lambda=0.05$ & $100$ & 43 & 950 & 7 & 146 & 854 & 0 \\[1mm]
& $500$ & 22 & 978 & 0 & 44 & 956 & 0 \\[1mm]
& $1000$ & 8 & 992 & 0 & 8 & 992 & 0 \\[1mm] \hline
& & \multicolumn{3}{c|}{} & \multicolumn{3}{c}{} \\[-3mm]
$\lambda=0.2$ & $100$ & 3 & 987 & 10 & 10 & 900 & 0 \\[1mm]
& $500$ & 0 & 1000 & 0 & 0 & 1000 & 0 \\[1mm]
& $1000$ & 0 & 1000 & 0 & 0 & 1000 & 0 \\[1mm] \hline
& & \multicolumn{3}{c|}{} & \multicolumn{3}{c}{} \\[-3mm]
HGQBIC & $n$ & Diff 1 & Diff $2^{\ast}$ & Diff 3 & Diff 1 & Diff $2^{\ast}$ & Diff 3 \\[1mm] \hline
& & \multicolumn{3}{c|}{} & \multicolumn{3}{c}{} \\[-3mm]
$\lambda=0.01$ & $100$ & 0 & 474 & 576 & 0 & 845 & 155 \\[1mm]
& $500$ & 0 & 834 & 166 & 0 & 937 & 63 \\[1mm]
& $1000$ & 0 & 985 & 15 & 0 & 985 & 15 \\[1mm] \hline
& & \multicolumn{3}{c|}{} & \multicolumn{3}{c}{} \\[-3mm]
$\lambda=0.05$ & $100$ & 0 & 680 & 320 & 0 & 929 & 71 \\[1mm]
& $500$ & 0 & 992 & 8 & 0 & 997 & 3 \\[1mm]
& $1000$ & 0 & 1000 & 0 & 0 & 1000 & 0 \\[1mm] \hline
& & \multicolumn{3}{c|}{} & \multicolumn{3}{c}{} \\[-3mm]
$\lambda=0.2$ & $100$ & 0 & 861 & 139 & 0 & 985 & 15 \\[1mm]
& $500$ & 0 & 998 & 2 & 0 & 1000 & 0 \\[1mm]
& $1000$ & 0 & 1000 & 0 & 0 & 1000 & 0 \\[1mm] \hline
& & \multicolumn{3}{c|}{} & \multicolumn{3}{c}{} \\[-3mm]
GQBIC & $n$ & Diff 1 & Diff $2^{\ast}$ & Diff 3 & Diff 1 & Diff $2^{\ast}$ & Diff 3 \\[1mm] \hline
& & \multicolumn{3}{c|}{} & \multicolumn{3}{c}{} \\[-3mm]
& $100$ & 108 & 885 & 7 & 235 & 765  & 0 \\[1mm]
& $500$ & 324 & 676 & 0 & 334 & 666 & 0 \\[1mm]
& $1000$ & 375 & 625 & 0 & 375 & 625 & 0 \\[1mm] \hline
\end{tabular}
}
\label{candsimu3}
\end{center}
\end{table}

\section{Proofs} \label{sec:prf}

\subsection{Proof of Theorem \ref{se:thm_denbic}}
This theorem follows from \cite[Theorem 3.4]{EguMas25} and \eqref{se:thm.qbic.cip2}.
In \cite[Theorem 3.4]{EguMas25}, the asymptotic mixed normality of density-power and H\"{o}lder-based GQMLEs is established under Assumptions \ref{hm:A_diff.coeff}--\ref{hm:A_lam}.
Moreover, the proof of \cite[Theorem 3.4]{EguMas25} shows that \eqref{appendix.thm1}--\eqref{appendix.thm6} hold for the density-power and the H\"{o}lder-based GQLFs.
Therefore, \eqref{se:thm.qbic.cip} in Theorem \ref{hm:thm.qbic} applies in the present setting, and the same stochastic expansion as \eqref{se:thm.qbic.cip2} holds for the density-power and the H\"{o}lder-based GQLFs.

Similarly to \eqref{se:thm.qbic.cip2}, we have
\begin{align*}
\mathfrak{F}_{n}(h^{d\lambda/2};\lambda)
&=-\log\left[\int_{\Theta}\exp\left\{\mbbh_{n}(\theta;\lambda)\right\}\pi(\theta)d\theta\right]+\frac{nh^{d\lambda/2}}{\lambda}-n \\
&=-\mbbh_{n}(\tz;\lambda)+\frac{p}{2}\log n+\frac{nh^{d\lambda/2}}{\lambda}-n+O_{p}(1), \\
\mathfrak{F}_{n}^{\flat}(\lambda/\mathsf{k}_{\lambda};\lambda)
&=-\log\left[\int_{\Theta}\exp\left\{\mbbh_{n}^{\flat}(\theta;\lam)\right\}\pi(\theta)d\theta\right]+\frac{n}{\mathsf{k}_{\lambda}} \\
&=-\mbbh_{n}^{\flat}(\tz;\lambda)+\frac{p}{2}\log n+\frac{n}{\mathsf{k}_{\lambda}}+O_{p}(1).
\end{align*}
By \cite[Theorem 3.4]{EguMas25}, $\mbbh_{n}(\tz;\lambda)=\mbbh_{n}(\tes(\lambda);\lambda)+O_{p}(1)$ and $\mbbh_{n}^{\flat}(\tz;\lambda)=\mbbh_{n}^{\flat}(\tes^{\flat}(\lambda);\lambda)+O_{p}(1)$.
Therefore, \eqref{se:st_exp_den} and \eqref{se:st_exp_hol} are established.

\subsection{Proof of Theorem \ref{se:thm_consis}}

\subsubsection{Proof of (i)}

We prove only \eqref{se:model_consis1}, since the proof of \eqref{se:model_consis2} is similar.

Let $m\in\mathfrak{M}\setminus\{m^{\ast}\}$, and suppose that $\Theta_{m^{\ast}}$ be nested in $\Theta_{m}$ ($p_{m^{\ast}}<p_{m}$).
Define a map $f_{m}:\Theta_{m^{\ast}}\to\Theta_{m}$ by $f_{m}(\theta_{m^{\ast}})=F_{m}\theta_{m^{\ast}}+c_{m}$, where $F_{m}\in\mathbb{R}^{p_{m}\times p_{m^{\ast}}}$ and $c_{m}\in\mathbb{R}^{p_{m}}$ satisfy $S_{m^{\ast}}(\cdot,\theta_{m^{{\ast}}})=S_{m}\big(\cdot,f_{m}(\theta_{m^{\ast}})\big)$ for all $\theta_{m^{\ast}}\in\Theta_{m^{\ast}}$.
From the definition of $f_{m}$, the equations $\mathbb{H}_{m^{\ast},n}(\theta_{m^{\ast}};\lambda)=\mathbb{H}_{m,n}\big(f_{m}(\theta_{m^{\ast}});\lambda\big)$ and $\mathbb{Y}_{m^{\ast},0}(\theta_{m^{\ast}})=\mathbb{Y}_{m,0}\big(f_{m}(\theta_{m^{\ast}})\big)$ are satisfied for all $\theta_{m^{\ast}}\in\Theta_{m^{\ast}}$.
If $f_{m}(\theta_{m^{\ast},0})\neq\theta_{m,0}$, then $\mathbb{Y}_{m^{\ast},0}(\theta_{m^{\ast},0})=\mathbb{Y}_{m,0}\big(f_{m}(\theta_{m^{\ast},0})\big)<\mathbb{Y}_{m,0}(\theta_{m,0})$, which contradicts $\mathbb{Y}_{m^{\ast},0}(\theta_{m^{\ast},0})=\mathbb{Y}_{m,0}(\theta_{m,0})$.
Hence, we have $f_{m}(\theta_{m^{\ast},0})=\theta_{m,0}$.

By the Taylor expansion of $\mbbh_{m,n}$ around $\hat{\theta}_{m,n}(\lambda)$, we have
\begin{align*}
&\mbbh_{m^{\ast},n}\big(\hat{\theta}_{m^{\ast},n}(\lambda);\lambda\big) \\
&=\mbbh_{m,n}(\hat{\theta}_{m,n}(\lambda),\lambda) \\
&\qquad -\frac{1}{2}\left(-\p_{\theta_{m}}^{2}\mbbh_{m,n}(\tilde{\theta}_{m,n}(\lambda);\lambda)\right)\left[\left(\hat{\theta}_{m,n}(\lambda)-f_{m}(\hat{\theta}_{m^{\ast},n}(\lambda))\right)^{\otimes2}\right],
\end{align*}
where $\tilde{\theta}_{m,n}(\lambda)\cip\theta_{m,0}$ as $n\to\infty$.
Moreover, from \cite[Theorem 3.4 and Lemma 4]{EguMas25}, we have 
\begin{align*}
\hat{\theta}_{m,n}(\lambda)-f_{m}(\hat{\theta}_{m^{\ast},n}(\lambda))
&=(\hat{\theta}_{m,n}(\lambda)-\theta_{m,0})-\big(f_{m}(\hat{\theta}_{m^{\ast},n}(\lambda))-f_{m}(\theta_{m^{\ast},0})\big) \\
&=(\hat{\theta}_{m,n}(\lambda)-\theta_{m,0})-F_{m}(\hat{\theta}_{m^{\ast},n}(\lambda)-\theta_{m^{\ast},0}) \\
&=O_{p}(n^{-1/2}), \\
\sup_{\theta_{m}\in\Theta_{m}}\left|\frac{1}{n}\p_{\theta_{m}}^{2}\mbbh_{m,n}(\theta_{m};\lambda)\right|&=O_{p}(1).
\end{align*}
Therefore, 
\begin{align*}
&P\left[\betabic^{(m)}_{n}>\betabic^{(m^{\ast}_{m})}_{n}\right] \\
&=P\bigg[\left(-\frac{1}{n}\p_{\theta_{m}}^{2}\mbbh_{m,n}(\tilde{\theta}_{m,n}(\lambda);\lambda)\right)\left[\left(\sqrt{n}\big(\hat{\theta}_{m,n}(\lambda)-f_{m}(\hat{\theta}_{m^{\ast},n}(\lambda))\big)\right)^{\otimes2}\right] \\
&\hspace{20mm} < (p_{m}-p_{m^{\ast}})\log n\bigg] \\
&\to1
\end{align*}
as $n\to\infty$.

\subsubsection{Proof of (ii)}
Recall that for any $m\in\{1,\ldots,M\}$,
\begin{align*}
\mathbb{Y}_{m,0}(\theta_{m})
&=-\frac{1}{2T}\int_{0}^{T}\bigg\{\log\left(\frac{\det S_{m}(X_{t},\theta_{m})}{\det S(X_{t})}\right) \\
&\hspace{25mm}+\trace\left(S_{m}(X_{t},\theta_{m})^{-1}S(X_{t})-I_{d}\right)\bigg\}dt,
\end{align*}
and for any $m_{t}\in\mathfrak{M}$,
\begin{align*}
\mathbb{Y}_{m_{t},n}(\theta_{m_{t}};\lambda)
&=\frac{1}{n}\big(\mbbh_{m_{t},n}(\theta_{m_{t}};\lambda)-\mbbh_{m_{t},n}(\theta_{m_{t},0};\lambda)\big), \\
\mathbb{Y}_{m_{t},n}^{\flat}(\theta_{m_{t}};\lambda)
&=\frac{1}{n}\big(\mbbh_{m_{t},n}^{\flat}(\theta_{m_{t}};\lambda)-\mbbh_{m_{t},n}^{\flat}(\theta_{m_{t},0};\lambda)\big).
\end{align*}
For any $m_{c}\in\mathfrak{M}^{c}$, we define 
\begin{align*}
\check{\mathbb{Y}}_{m_{c},n}(\theta_{m_{c}};\lambda)
&=\frac{1}{n}\big(\mbbh_{m_{c},n}(\theta_{m_{c}};\lambda)-\mbbh_{m^{\ast},n}(\theta_{m^{\ast},0};\lambda)\big), \\
\check{\mathbb{Y}}_{m_{c},n}^{\flat}(\theta_{m_{c}};\lambda)
&=\frac{1}{n}\big(\mbbh_{m_{c},n}^{\flat}(\theta_{m_{c}};\lambda)-\mbbh_{m^{\ast},n}^{\flat}(\theta_{m^{\ast},0};\lambda)\big).
\end{align*}
Furthermore, under Assumption \ref{se:ass_modconsis} (ii), for any $m_{c}\in\mathfrak{M}^{c}$, we define
\begin{align*}
\bar{\mathbb{Y}}_{m_{c},n}(\theta_{m_{c}};\lambda)
&=\frac{1}{n}\big(\mbbh_{m_{c},n}(\theta_{m_{c}};\lambda)-\mbbh_{m_{c},n}(\bar{\theta}_{m_{c}};\lambda)\big), \\
\bar{\mathbb{Y}}_{m_{c},n}^{\flat}(\theta_{m_{c}};\lambda)
&=\frac{1}{n}\big(\mbbh_{m_{c},n}^{\flat}(\theta_{m_{c}};\lambda)-\mbbh_{m_{c},n}^{\flat}(\bar{\theta}_{m_{c}};\lambda)\big).
\end{align*}

We will apply the following lemmas for verifying \eqref{se:model_consis3} and \eqref{se:model_consis4}.

\begin{lem}
\label{se:lem1}
Suppose that Assumptions \ref{hm:A_diff.coeff}--\ref{hm:A_lam} hold.
Then, for any $m_{t}\in\mathfrak{M}$, we have
\begin{align*}
\mathbb{Y}_{m_{t},n}(\theta_{m_{t}};\lambda)&\cip\mathbb{Y}_{m_{t},0}(\theta_{m_{t}}), \\
\mathbb{Y}_{m_{t},n}^{\flat}(\theta_{m_{t}};\lambda)&\cip\mathbb{Y}_{m_{t},0}(\theta_{m_{t}})
\end{align*}
uniformly in $\theta_{m_{t}}$.
Moreover, for any $m_{c}\in\mathfrak{M}^{c}$, we have
\begin{align*}
\check{\mathbb{Y}}_{m_{c},n}(\theta_{m_{c}};\lambda)&\cip\mathbb{Y}_{m_{c},0}(\theta_{m_{c}}), \\
\check{\mathbb{Y}}_{m_{c},n}^{\flat}(\theta_{m_{c}};\lambda)&\cip\mathbb{Y}_{m_{c},0}(\theta_{m_{c}})
\end{align*}
uniformly in $\theta_{m_{c}}$.
\end{lem}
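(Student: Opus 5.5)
The plan is to reduce everything to a uniform law of large numbers for the per-increment contrast with the jump increments removed. Write
\begin{align*}
\mbbh_{n}(\theta;\lambda)=\sumj \psi_\lambda\big(S_{j-1}(\theta),\nY_j\big),\qquad
\psi_\lambda(S,y):=\det(S)^{-\lambda/2}\left\{\frac{1}{\lambda}\phi(S^{-1/2}y)^\lambda-\frac{(2\pi)^{-d\lambda/2}}{(\lambda+1)^{1+d/2}}\right\},
\end{align*}
and use the analogous form for $\mbbh_{n}^{\flat}$. Split the indices into $\mathcal{J}_n=\{j:\ J \text{ or } J' \text{ jumps in } (t_{j-1},t_j]\}$ and its complement. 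Finite activity gives $\#\mathcal{J}_n=O_p(1)$.

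\textbf{Step 1 (jump intervals).} Since $\phi\le(2\pi)^{-d/2}$ and $S$ is uniformly nondegenerate and bounded over $\bar\Theta_m$ (Assumption \ref{hm:A_diff.coeff}), we have
\begin{align*}
\big|\psi_\lambda(S,y)-\psi_\lambda(S',y')\big|\lesssim \lambda^{-1}
\end{align*}
uniformly in $y,y'$ and $\theta$. This needs care: the naive $\lambda^{-1}$ bound blows up. The key is to use $\psi_\lambda(S,y)-\lambda^{-1}\det(S)^{-\lambda/2}=\lambda^{-1}\det(S)^{-\lambda/2}(\phi^\lambda-1)$. Because the differences in $\mathbb{Y}$, $\check{\mathbb{Y}}$ involve differences of $\mbbh$ at two parameters, the constant parts $\lambda^{-1}\det(S)^{-\lambda/2}$ differ by $O(1)$ (mean value theorem in $\lambda$). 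The remaining part satisfies $\lambda^{-1}|\phi(\cdot)^\lambda-1|\lesssim 1+|S^{-1/2}y|^2$ for moderate $y$, and is bounded by $\lambda^{-1}$ always. So each jump term contributes at most $O(\lambda^{-1})$. The total jump contribution to $\frac1n(\cdots)$ is then $O_p(1/(n\lambda_n))\to0$, using Assumption \ref{hm:A_lam} ($n\lambda_n\to\infty$ or a slower-decay condition). This is exactly where robustness enters. For $\mbbh^{\flat}$ the same bound holds with $\det(S)^{-\lambda/(2(\lambda+1))}$.

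\textbf{Step 2 (continuous part).} On $\mathcal{J}_n^c$, $\nY_j=S_{j-1}^{1/2}\epsilon_j+O_p(h^{1/2})$, with $\epsilon_j$ approximately standard normal conditional on $\mcf_{t_{j-1}}$. Expand $\psi_\lambda$ in $\lambda$:
\begin{align*}
\psi_\lambda(S,y)=\lambda^{-1}\det(S)^{-\lambda/2}\big(1-\tfrac{\lambda}{\lambda+1}\cdots\big)+\log\phi(S^{-1/2}y)-\tfrac12\log\det S+O(\lambda(1+|y|^4)).
\end{align*}
The differences at two parameters thus equal the classical Gaussian quasi-log-likelihood differences plus $O(\lambda)$ uniformly, up to deterministic $\lambda$-normalizing terms which cancel to $o(1)$.

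Alternatively, and this is what I would do first, compute the conditional expectation exactly. For $\epsilon\sim N(0,S_0)$, $E[\phi(S^{-1/2}\epsilon)^\lambda]$ has a closed Gaussian form, and its $\lambda\to0$ limit gives
\begin{align*}
-\tfrac12\{\log\det S+\trace(S^{-1}S_0)\}+\text{const}.
\end{align*}
Then apply the standard martingale LLN (Lenglart / Burkholder): $\frac1n\sum_j(\cdot-E_{j-1}\cdot)\cip0$. Riemann sums converge to the $dt$-integrals by the càdlàg path regularity of $X$, and removing $\mathcal{J}_n$ changes them by $O_p(1/n)$. Differencing gives $\mathbb{Y}_{m,0}(\theta_m)-\mathbb{Y}_{m,0}(\theta_{m,0})$, which equals $\mathbb{Y}_{m,0}(\theta_m)$ since $\mathbb{Y}_{m_t,0}(\theta_{m_t,0})=0$. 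For $\check{\mathbb{Y}}$ the reference term is $\mbbh_{m^\ast,n}(\theta_{m^\ast,0})$, which uses the true $S$, so the limit is again $\mathbb{Y}_{m_c,0}$.

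\textbf{Step 3 (uniformity).} Obtain pointwise convergence as above. Then use the fact that $\sup_\theta|\frac1n\p_\theta(\cdot)|=O_p(1)$, from the smoothness in Assumption \ref{hm:A_diff.coeff}, to get tightness in $C(\bar\Theta)$. Alternatively, apply the Sobolev embedding bound with $E\sup|\cdot|^q$ controlled via moments of $\p_\theta$. These derivative bounds were already shown in \cite{EguMas25} (Lemma 4 quoted above); for misspecified $m_c$ they only use boundedness and nondegeneracy of $S_{m_c}$, not correct specification.

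\textbf{Main obstacle.} The main obstacle is Step 1 combined with the $\lambda_n\to0$ interplay. Expansions in $\lambda$ must be uniform over $y$ on the continuous part, which has Gaussian tails so moments suffice. The crude $\lambda^{-1}$ bound on the jump part must be compatible with the rate in Assumption \ref{hm:A_lam}. I would check that condition first, and if needed refine the bound using the fact that a jump of size $O(1)$ makes $|\nY_j|\gtrsim h^{-1/2}$, so $\phi^\lambda\approx e^{-c\lambda/h}\to0$ when $\lambda/h\to\infty$.
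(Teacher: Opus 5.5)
Your proposal is correct, and its central computation is the paper's. The closed-form moment $E[\phi(S^{-1/2}\epsilon)^\lambda]=(2\pi)^{-d\lambda/2}\det(I_d+\lambda V^{-1})^{-1/2}$, with $\epsilon\sim N(0,S_0)$ and $V=S_0^{-1/2}SS_0^{-1/2}$, is exactly the integrand of the paper's fixed-$\lambda$ limit $\mathbb{Y}_{m_t,0}(\theta_{m_t};\lambda)$. The paper then reaches $\mathbb{Y}_{m_t,0}(\theta_{m_t})$ by the same first-order expansion in $\lambda$ of $\det(S)^{-\lambda/2}$ and $\det(I_d+\lambda V^{-1})^{-1/2}$.

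You differ in how you combine $n\to\infty$ with $\lambda_n\to0$. The paper takes the fixed-$\lambda$ uniform LLN from \cite[Appendix B]{EguMas25} and then replaces its limit by the $\lambda\to0$ version by ``incorporating Assumption \ref{hm:A_lam}'', without further detail. You instead work with $\lambda_n$ directly:
\begin{itemize}
\item each jump interval costs $O(\lambda_n^{-1})$, so the jumps contribute $O_p((n\lambda_n)^{-1})$ after the factor $1/n$;
\item on the jump-free part, the $\lambda^{-1}$ cancels in parameter differences;
\item the same holds for $\p_\theta$, where $|y|^2\phi(S^{-1/2}y)^{\lambda}\lesssim\lambda^{-1}$ keeps your tightness bound uniform in $\lambda_n$.
\end{itemize}
Your route therefore justifies the interchange of limits that the paper only asserts, at the cost of redoing the LLN the paper outsources. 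Your expansion alternative, which reduces to the classical GQLF on non-jump intervals up to $O_p(\lambda_n)$, is also valid.

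The condition you defer checking does hold, so the $\lambda_n/h\to\infty$ refinement is unnecessary:
\begin{itemize}
\item For $\kappa\le 3/2$, Assumption \ref{hm:A_lam} gives $n\lambda_n=T^{\kappa}n^{3/2-\kappa}\,\lambda_n/(\sqrt{n}h^{\kappa})\to\infty$.
\item For $\kappa>1$, Assumption \ref{hm:A_J} and the moment bounds on $X$ give $P[N_T+N'_T\ge1]\lesssim nh^{\kappa}\to0$, so there are no jumps at all.
\end{itemize}

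Two small slips, neither affecting the argument:
\begin{itemize}
\item Your identity for $\psi_\lambda-\lambda^{-1}\det(S)^{-\lambda/2}$ drops the term $-(2\pi)^{-d\lambda/2}(\lambda+1)^{-1-d/2}\det(S)^{-\lambda/2}$, which is harmless because its parameter difference is $O(\lambda)$.
\item $\mathcal{J}_n$ is not predictable, so the martingale LLN should be run on the contrast built from the jump-free part of $Y$, not on the sum over $\mathcal{J}_n^{c}$ directly.
\end{itemize}
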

\begin{proof}
Let
\begin{align*}
V_{m}(x,\theta_{m}):=S(x)^{-1/2}S_{m}(x,\theta_{m})S(x)^{-1/2}.
\end{align*}
for any $m\in\{1,\ldots,M\}$.
When $m_{t}\in\mathfrak{M}$, $V_{m_{t}}(x,\theta_{m_{t},0})=I_{d}$.
From \cite[Appendix B]{EguMas25}, for any $m_{t}\in\mathfrak{M}$
and fixed $\lambda>0$, it holds that
\begin{align}
&\mathbb{Y}_{m_{t},n}(\theta_{m_{t}};\lambda) \\
&\cip\frac{(2\pi)^{-\lambda d/2}}{T}\int_{0}^{T}\bigg[\det\big(S_{m_{t}}(X_{t},\theta_{m_{t}})\big)^{-\lambda/2}\bigg\{\frac{1}{\lambda}\det\big(\lambda V_{m_{t}}(X_{t},\theta_{m_{t}})^{-1}+I_{d}\big)^{-1/2} \\
&\hspace{65mm} -\frac{1}{(\lambda+1)^{1+d/2}}\bigg\} \\
&\hspace{25mm} -\det\big(S(X_{t})\big)^{-\lambda/2}\left(\frac{1}{\lambda}\det\big((\lambda+1)I_{d}\big)^{-1/2}-\frac{1}{(\lambda+1)^{1+d/2}}\right)\bigg]dt \\
&=\frac{(2\pi)^{-\lambda d/2}}{T}\int_{0}^{T}\bigg[\frac{1}{(\lambda+1)^{1+d/2}}\left(\det\big(S(X_{t})\big)^{-\lambda/2}-\det\big(S_{m_{t}}(X_{t},\theta_{m_{t}})\big)^{-\lambda/2}\right) \\
&\hspace{28mm} +\frac{1}{\lambda}\bigg\{\det\big(S_{m_{t}}(X_{t},\theta_{m_{t}})\big)^{-\lambda/2}\det\big(\lambda V_{m_{t}}(X_{t},\theta_{m_{t}})^{-1}+I_{d}\big)^{-1/2} \\
&\hspace{35mm} -\det\big(S(X_{t})\big)^{-\lambda/2}\det\big((\lambda+1)I_{d}\big)^{-1/2}\bigg\}\bigg]dt \\
&=:\mathbb{Y}_{m_{t},0}(\theta_{m_{t}};\lambda), \label{se:unifc_y_beta}\\
&\mathbb{Y}_{m_{t},n}^{\flat}(\theta_{m_{t}};\lambda) \\
&\cip\frac{(2\pi)^{-\lambda d/2}}{T}\int_{0}^{T}\frac{1}{\lambda}\bigg\{\det\big(S_{m_{t}}(X_{t},\theta_{m_{t}})\big)^{-\lambda/2(\lambda+1)}\det\big(\lambda V_{m_{t}}(X_{t},\theta_{m_{t}})^{-1}+I_{d}\big)^{-1/2} \\
&\hspace{32mm} -\det\big(S(X_{t})\big)^{-\lambda/2(\lambda+1)}\det\big((\lambda+1)I_{d}\big)^{-1/2}\bigg\}dt \\
&=:\mathbb{Y}_{m_{t},0}^{\flat}(\theta_{m_{t}};\lambda) \label{se:unifc_y_hol}
\end{align}
uniformly in $\theta_{m_{t}}$.
Incorporating Assumption \ref{hm:A_lam} into this framework, 
we have
\begin{align*}
\mathbb{Y}_{m_{t},n}(\theta_{m_{t}};\lambda)&\cip\lim_{\lambda\to0}\mathbb{Y}_{m_{t},0}(\theta_{m_{t}};\lambda), \\
\mathbb{Y}_{m_{t},n}^{\flat}(\theta_{m_{t}};\lambda)&\cip\lim_{\lambda\to0}\mathbb{Y}_{m_{t},0}^{\flat}(\theta_{m_{t}};\lambda)
\end{align*}
uniformly in $\theta_{m_{t}}$, analogously to \eqref{se:unifc_y_beta} and \eqref{se:unifc_y_hol}.
Thus, it suffices to consider the limits $\lim_{\lambda\to0}\mathbb{Y}_{m_{t},0}(\theta_{m_{t}};\lambda)$ and $\lim_{\lambda\to0}\mathbb{Y}_{m_{t},0}^{\flat}(\theta_{m_{t}};\lambda)$.
By Taylor expansions in $\lambda$ around zero, we have
\begin{align*}
\det\big(S_{m_{t}}(X_{t},\theta_{m_{t}})\big)^{-\lambda/2}
&=1-\frac{1}{2}\log\big(\det\big(S_{m_{t}}(X_{t},\theta_{m_{t}})\big)\big)\lambda+O_{p}(\lambda^{2}), \\
\det\big(S(X_{t})\big)^{-\lambda/2}
&=1-\frac{1}{2}\log\big(\det\big(S(X_{t})\big)\big)\lambda+O_{p}(\lambda^{2}), \\
\det\big(S_{m_{t}}(X_{t},\theta_{m_{t}})\big)^{-\lambda/2(\lambda+1)}
&=1-\frac{1}{2}\log\big(\det\big(S_{m_{t}}(X_{t},\theta_{m_{t}})\big)\big)\lambda+O_{p}(\lambda^{2}), \\
\det\big(S(X_{t})\big)^{-\lambda/2(\lambda+1)}
&=1-\frac{1}{2}\log\big(\det\big(S(X_{t})\big)\big)\lambda+O_{p}(\lambda^{2}), \\
\det\big(\lambda V_{m_{t}}(X_{t},\theta_{m_{t}})^{-1}+I_{d}\big)^{-1/2}
&=1-\frac{1}{2}\trace\big(V_{m_{t}}(X_{t},\theta_{m_{t}})^{-1}\big)\lambda+O_{p}(\lambda^{2}) \\
&=1-\frac{1}{2}\trace\big(S_{m_{t}}(X_{t},\theta_{m_{t}})^{-1}S(X_{t})\big)\lambda+O_{p}(\lambda^{2}), \\
\det\big((\lambda+1)I_{d}\big)^{-1/2} 
&=1-\frac{1}{2}\trace(I_{d})\lambda+O_{p}(\lambda^{2}),
\end{align*}
and the $O_{p}(\lambda^{2})$ terms are valid uniformly in $\theta_{m_{t}}$; for example,
\begin{equation}
    \sup_{\theta_{m_{t}}\in\Theta_{m_{t}}}\left|
    \det\big(S_{m_{t}}(X_{t},\theta_{m_{t}})\big)^{-\lambda/2}
    - \left( 1-\frac{1}{2}\log\big(\det\big(S_{m_{t}}(X_{t},\theta_{m_{t}})\big)\big)\lambda \right)
    \right|
    =O_{p}(\lambda^{2}).
\end{equation}
Hence, we obtain
\begin{align*}
&\lim_{\lambda\to0}\mathbb{Y}_{m_{t},0}(\theta_{m_{t}};\lambda) \\
&=\lim_{\lambda\to0}\frac{(2\pi)^{-\lambda d/2}}{T}\int_{0}^{T}\bigg[O_{p}(\lambda)+\frac{1}{\lambda}\bigg\{\Big(1-\frac{1}{2}\log\big(\det\big(S_{m_{t}}(X_{t},\theta_{m_{t}})\big)\big)\lambda \\
&\hspace{57mm} -\frac{1}{2}\trace\big(S_{m_{t}}(X_{t},\theta_{m_{t}})^{-1}S(X_{t})\big)\lambda+O_{p}(\lambda^{2})\Big) \\
&\hspace{50mm} -\Big(1-\frac{1}{2}\log\big(\det\big(S(X_{t})\big)\big)\lambda-\frac{1}{2}\trace(I_{d})\lambda \\
&\hspace{57mm} +O_{p}(\lambda^{2})\Big)\bigg\}\bigg]dt \\
&=\lim_{\lambda\to0}-\frac{(2\pi)^{-\lambda d/2}}{2T}\int_{0}^{T}\left\{\log\bigg(\frac{\det \big(S_{m_{t}}(X_{t},\theta_{m_{t}})\big)}{\det \big(S(X_{t})\big)}\right) \\
&\hspace{37mm} +\trace\left(S_{m_{t}}(X_{t},\theta_{m_{t}})^{-1}S(X_{t})-I_{d}\right)+O_{p}(\lambda)\bigg\}dt \\
&=\mathbb{Y}_{m_{t},0}(\theta_{m_{t}}), \\
&\lim_{\lambda\to0}\mathbb{Y}_{m_{t},0}^{\flat}(\theta_{m_{t}};\lambda) \\
&=\lim_{\lambda\to0}\frac{(2\pi)^{-\lambda d/2}}{T}\int_{0}^{T}\bigg[\frac{1}{\lambda}\bigg\{\Big(1-\frac{1}{2}\log\big(\det\big(S_{m_{t}}(X_{t},\theta_{m_{t}})\big)\big)\lambda \\
&\hspace{41mm} -\frac{1}{2}\trace\big(S_{m_{t}}(X_{t},\theta_{m_{t}})^{-1}S(X_{t})\big)\lambda+O_{p}(\lambda^{2})\Big) \\
&\hspace{34mm} -\Big(1-\frac{1}{2}\log\big(\det\big(S(X_{t})\big)\big)\lambda-\frac{1}{2}\trace(I_{d})\lambda+O_{p}(\lambda^{2})\Big)\bigg\}\bigg]dt \\
&=\mathbb{Y}_{m_{t},0}(\theta_{m_{t}}).
\end{align*}
uniformly in $\theta_{m_{t}}$.
Therefore, 
\begin{align*}
\mathbb{Y}_{m_{t},n}(\theta_{m_{t}};\lambda)\cip\mathbb{Y}_{m_{t},0}(\theta_{m_{t}}), \quad \mathbb{Y}_{m_{t},n}^{\flat}(\theta_{m_{t}};\lambda)\cip\mathbb{Y}_{m_{t},0}(\theta_{m_{t}})
\end{align*}
uniformly in $\theta_{m_{t}}$.

Similarly, for any $m_{c}\in\mathfrak{M}^{c}$ and fixed $\lambda>0$, we have
\begin{align*}
\check{\mathbb{Y}}_{m_{c},n}(\theta_{m_{c}};\lambda)
\cip\mathbb{Y}_{m_{c},0}(\theta_{m_{c}};\lambda), 
\quad \check{\mathbb{Y}}_{m_{c},n}^{\flat}(\theta_{m_{c}};\lambda)
\cip\mathbb{Y}_{m_{c},0}^{\flat}(\theta_{m_{c}};\lambda)
\end{align*}
uniformly in $\theta_{m_{c}}$.
Since we have
\begin{align*}
\lim_{\lambda\to0}\mathbb{Y}_{m_{c},0}(\theta_{m_{c}};\lambda)=\mathbb{Y}_{m_{c},0}(\theta_{m_{c}}), 
\quad \lim_{\lambda\to0}\mathbb{Y}_{m_{c},0}^{\flat}(\theta_{m_{c}};\lambda)=\mathbb{Y}_{m_{c},0}(\theta_{m_{c}})
\end{align*}
uniformly in $\theta_{m_{c}}$, it follows that
\begin{align*}
\check{\mathbb{Y}}_{m_{c},n}(\theta_{m_{c}};\lambda)&\cip\mathbb{Y}_{m_{c},0}(\theta_{m_{c}}), \quad \check{\mathbb{Y}}_{m_{c},n}^{\flat}(\theta_{m_{c}};\lambda)\cip\mathbb{Y}_{m_{c},0}(\theta_{m_{c}}).
\end{align*}
\end{proof}

For any $m_{t}\in\mathfrak{M}$, since $\hat{\theta}_{m_{t},n}(\lambda)\cip\theta_{m_{t},0}$ and $\hat{\theta}_{m_{t},n}^{\flat}(\lambda)\cip\theta_{m_{t},0}$ (see \cite[Theorem 3.4]{EguMas25}), it follows from Lemma \ref{se:lem1} that $\mathbb{Y}_{m_{t},n}(\hat{\theta}_{m_{t},n}(\lambda);\lambda)\cip0$ and $\mathbb{Y}_{m_{t},n}^{\flat}(\hat{\theta}_{m_{t},n}^{\flat}(\lambda);\lambda)\cip0$.

\begin{lem}
\label{se:lem2}
Suppose that Assumptions \ref{hm:A_diff.coeff}--\ref{hm:A_lam}, and \ref{se:ass_modconsis} (ii) hold.
Then, for any $m_{c}\in\mathfrak{M}^{c}$, we have
\begin{align*}
\bar{\mathbb{Y}}_{m_{c},n}(\hat{\theta}_{m_{c},n}(\lambda);\lambda)
&\cip0, \\
\bar{\mathbb{Y}}_{m_{c},n}^{\flat}(\hat{\theta}_{m_{c},n}^{\flat}(\lambda);\lambda)
&\cip0.
\end{align*}
\end{lem}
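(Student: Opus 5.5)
Since $\hat{\theta}_{m_{c},n}(\lambda)\cip\bar{\theta}_{m_{c}}$ by Assumption \ref{se:ass_modconsis} (ii), the lemma reduces to showing that the random field $\theta_{m_c}\mapsto\bar{\mathbb{Y}}_{m_{c},n}(\theta_{m_c};\lambda)$ converges uniformly in probability to a limit that is continuous in $\theta_{m_c}$ and vanishes at $\bar{\theta}_{m_c}$. The natural decomposition is
\begin{align*}
\bar{\mathbb{Y}}_{m_{c},n}(\theta_{m_{c}};\lambda)
=\check{\mathbb{Y}}_{m_{c},n}(\theta_{m_{c}};\lambda)-\check{\mathbb{Y}}_{m_{c},n}(\bar{\theta}_{m_{c}};\lambda),
\end{align*}
which holds identically because the common term $\mbbh_{m^{\ast},n}(\theta_{m^{\ast},0};\lambda)/n$ cancels. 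The same identity holds for the H\"{o}lder-based versions.

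By Lemma \ref{se:lem1}, $\check{\mathbb{Y}}_{m_{c},n}(\cdot;\lambda)\cip\mathbb{Y}_{m_{c},0}(\cdot)$ uniformly on $\bar{\Theta}_{m_c}$. It follows that $\bar{\mathbb{Y}}_{m_{c},n}(\cdot;\lambda)\cip\mathbb{Y}_{m_{c},0}(\cdot)-\mathbb{Y}_{m_{c},0}(\bar{\theta}_{m_c})$ uniformly. Evaluating at $\hat{\theta}_{m_{c},n}(\lambda)$ and taking the supremum then gives
\begin{align*}
\big|\bar{\mathbb{Y}}_{m_{c},n}(\hat{\theta}_{m_{c},n}(\lambda);\lambda)\big|
\le 2\sup_{\theta_{m_c}}\big|\check{\mathbb{Y}}_{m_{c},n}(\theta_{m_c};\lambda)-\mathbb{Y}_{m_{c},0}(\theta_{m_c})\big|
+\big|\mathbb{Y}_{m_{c},0}(\hat{\theta}_{m_{c},n}(\lambda))-\mathbb{Y}_{m_{c},0}(\bar{\theta}_{m_{c}})\big|.
\end{align*}
The first term is $o_p(1)$ by Lemma \ref{se:lem1}.

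For the second term, I will argue that $\theta_{m_c}\mapsto\mathbb{Y}_{m_c,0}(\theta_{m_c})$ is a.s. continuous on $\bar{\Theta}_{m_c}$. This follows from the smoothness and nondegeneracy of $S_{m_c}(x,\theta)$ in Assumption \ref{hm:A_diff.coeff}, together with the boundedness of $X$ on $[0,T]$ (its paths are c\`adl\`ag). Dominated convergence then applies to the time integral of $\log\det S_{m_c}$ and $\trace(S_{m_c}^{-1}S)$. The continuous mapping theorem combined with $\hat{\theta}_{m_{c},n}(\lambda)\cip\bar{\theta}_{m_c}$ then makes the second term $o_p(1)$.

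The H\"{o}lder-based statement follows verbatim, using $\hat{\theta}^{\flat}_{m_{c},n}(\lambda)\cip\bar{\theta}_{m_c}$ and the $\flat$-part of Lemma \ref{se:lem1}. The main point needing care is the uniformity in Lemma \ref{se:lem1} for misspecified $m_c$, specifically the interchange of $n\to\infty$ with $\lambda_n\to0$. That lemma already asserts this uniformity, so here the only genuine check is the continuity of the limit field.
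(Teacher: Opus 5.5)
Your proposal is correct and follows essentially the paper's route: show that $\bar{\mathbb{Y}}_{m_{c},n}(\cdot;\lambda)$ (and its $\flat$-version) converges uniformly in probability to a limit that vanishes at $\bar{\theta}_{m_{c}}$, then combine this with $\hat{\theta}_{m_{c},n}(\lambda)\cip\bar{\theta}_{m_{c}}$. The only differences are these: you derive the uniform limit from Lemma \ref{se:lem1} via the identity $\bar{\mathbb{Y}}_{m_{c},n}(\theta_{m_{c}};\lambda)=\check{\mathbb{Y}}_{m_{c},n}(\theta_{m_{c}};\lambda)-\check{\mathbb{Y}}_{m_{c},n}(\bar{\theta}_{m_{c}};\lambda)$, which yields $\mathbb{Y}_{m_{c},0}(\theta_{m_{c}})-\mathbb{Y}_{m_{c},0}(\bar{\theta}_{m_{c}})$, exactly the paper's $\bar{\mathbb{Y}}_{m_{c},0}(\theta_{m_{c}})$; and you make explicit the continuity of the limit that the paper uses tacitly, whereas the paper just asserts the uniform convergence ``in a similar way as Lemma \ref{se:lem1}''.
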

\begin{proof}
In a similar way as Lemma \ref{se:lem1}, for any $m_{c}\in\mathfrak{M}^{c}$, we obtain
\begin{align*}
\bar{\mathbb{Y}}_{m_{c},n}(\theta_{m_{c}};\lambda)
&\cip-\frac{1}{2T}\int_{0}^{T}\left\{\log\bigg(\frac{\det \big(S_{m_{c}}(X_{t},\theta_{m_{c}})\big)}{\det \big(S_{m_{c}}(X_{t},\bar{\theta}_{m_{c}})\big)}\right) \\
&\quad+\trace\left(S_{m_{c}}(X_{t},\theta_{m_{c}})^{-1}S(X_{t})-S_{m_{c}}(X_{t},\bar{\theta}_{m_{c}})^{-1}S(X_{t})\right)\bigg\}dt, \\
&=:\bar{\mathbb{Y}}_{m_{c},0}(\theta_{m_{c}}), \\
\bar{\mathbb{Y}}_{m_{c},n}^{\flat}(\theta_{m_{c}};\lambda)
&\cip\bar{\mathbb{Y}}_{m_{c},0}(\theta_{m_{c}})
\end{align*}
uniformly in $\theta_{m_{c}}$.
Since $\hat{\theta}_{m_{c},n}(\lambda)\cip\bar{\theta}_{m_{c}}$ and $\hat{\theta}_{m_{c},n}^{\flat}(\lambda)\cip\bar{\theta}_{m_{c}}$,
\begin{align*}
\bar{\mathbb{Y}}_{m_{c},n}(\hat{\theta}_{m_{c},n}(\lambda);\lambda)
&\cip\bar{\mathbb{Y}}_{m_{c},0}(\bar{\theta}_{m_{c}})=0, \\
\bar{\mathbb{Y}}_{m_{c},n}^{\flat}(\hat{\theta}_{m_{c},n}^{\flat}(\lambda);\lambda)
&\cip\bar{\mathbb{Y}}_{m_{c},0}(\bar{\theta}_{m_{c}})=0.
\end{align*}
\end{proof}

Applying Lemmas \ref{se:lem1} and \ref{se:lem2}, we prove \eqref{se:model_consis3}.
To this end, it is sufficient to show that 
\begin{align*}
P\left[\min_{m_{c}\in\mathfrak{M}^{c}}\betabic^{(m_{c})}_{n}\leq\betabic^{(m^{\ast})}_{n}\right]\to0
\end{align*}
as $n\to\infty$.
We obtain
\begin{align*}
0&\leq P\left[\min_{m_{c}\in\mathfrak{M}^{c}}\betabic^{(m_{c})}_{n}\leq\betabic^{(m^{\ast})}_{n}\right] \\
&=P\left[\bigcup_{m_{c}\in\mathfrak{M}^{c}}\left(\betabic^{(m)}_{n}\leq\betabic^{(m^{\ast})}_{n}\right)\right] \\
&\leq \sum_{m_{c}\in\mathfrak{M}^{c}} P\left[\betabic^{(m_{c})}_{n}\leq\betabic^{(m^{\ast})}_{n}\right] \\
&=\sum_{m_{c}\in\mathfrak{M}^{c}} P\left[\frac{1}{n}\left(\betabic^{(m_{c})}_{n}-\betabic^{(m^{\ast})}_{n}\right)\leq0\right].
\end{align*}

\begin{itemize}
\item When Assumption \ref{se:ass_modconsis} (i) holds, for any $m_{c}\in\mathfrak{M}^{c}$, it follows from Lemma \ref{se:lem1} that
\begin{align}
&P\left[\frac{1}{n}\left(\betabic^{(m_{c})}_{n}-\betabic^{(m^{\ast})}_{n}\right)\leq0\right] \nn\\
&=P\bigg[-\frac{2}{n}\left(\mbbh_{m_{c},n}(\hat{\theta}_{m_{c},n}(\lambda);\lambda)-\mbbh_{m^{\ast},n}(\hat{\theta}_{m^{\ast},n}(\lambda);\lambda)\right) \nn\\
&\hspace{10mm} -(p_{m^{\ast}}-p_{m_{c}})\frac{\log n}{n}\leq0\bigg] \nn\\
&=P\bigg[\frac{2}{n}\left(\mbbh_{m_{c},n}(\hat{\theta}_{m_{c},n}(\lambda);\lambda)-\mbbh_{m^{\ast},n}(\theta_{m^{\ast},0};\lambda)\right) \\
&\hspace{10mm} -\frac{2}{n}\left(\mbbh_{m^{\ast},n}(\hat{\theta}_{m^{\ast},n}(\lambda);\lambda)-\mbbh_{m^{\ast},n}(\theta_{m^{\ast},0};\lambda)\right) \nn\\
&\hspace{10mm} -(p_{m^{\ast}}-p_{m_{c}})\frac{\log n}{n}\geq0\bigg] \nn\\
&\leq P\left[\sup_{\theta_{m_{c}}\in\bar{\Theta}_{m_{c}}}2\check{\mathbb{Y}}_{m_{c},n}(\theta_{m_{c}};\lambda)-2\mathbb{Y}_{m^{\ast},n}(\hat{\theta}_{m^{\ast},n}(\lambda);\lambda)-(p_{m^{\ast}}-p_{m_{c}})\frac{\log n}{n}\geq0\right] \nn\\
&=P\left[\sup_{\theta_{m_{c}}\in\bar{\Theta}_{m_{c}}}\mathbb{Y}_{m_{c},0}(\theta_{m_{c}})+o_{p}(1)\geq0\right] \nn\\
&\to0. \label{se:misconsis1}
\end{align}

\item When Assumption \ref{se:ass_modconsis} (ii) holds, for any $m_{c}\in\mathfrak{M}^{c}$, it follows from Lemmas \ref{se:lem1} and \ref{se:lem2} that
\begin{align}
&P\left[\frac{1}{n}\left(\betabic^{(m_{c})}_{n}-\betabic^{(m^{\ast})}_{n}\right)\leq0\right] \nn\\
&=P\bigg[-\frac{2}{n}\left(\mbbh_{m_{c},n}(\hat{\theta}_{m_{c},n}(\lambda);\lambda)-\mbbh_{m^{\ast},n}(\hat{\theta}_{m^{\ast},n}(\lambda);\lambda)\right) \nn\\
&\hspace{10mm} -(p_{m^{\ast}}-p_{m_{c}})\frac{\log n}{n}\leq0\bigg] \nn\\
&=P\bigg[\frac{2}{n}\left(\mbbh_{m_{c},n}(\hat{\theta}_{m_{c},n}(\lambda);\lambda)-\mbbh_{m_{c},n}(\bar{\theta}_{m_{c}};\lambda)\right) \\
&\hspace{10mm} -\frac{2}{n}\left(\mbbh_{m^{\ast},n}(\hat{\theta}_{m^{\ast},n}(\lambda);\lambda)-\mbbh_{m^{\ast},n}(\theta_{m^{\ast},0};\lambda)\right) \nn\\
&\hspace{10mm} +\frac{2}{n}\left(\mbbh_{m_{c},n}(\bar{\theta}_{m_{c}};\lambda)-\mbbh_{m^{\ast},n}(\theta_{m^{\ast},0};\lambda)\right) \nn\\
&\hspace{10mm} -(p_{m^{\ast}}-p_{m_{c}})\frac{\log n}{n}\geq0\bigg] \nn\\
&\leq P\bigg[2\bar{\mathbb{Y}}_{m_{c},n}(\hat{\theta}_{m_{c},n}(\lambda);\lambda)-2\mathbb{Y}_{m^{\ast},n}(\hat{\theta}_{m^{\ast},n}(\lambda);\lambda)+2\check{\mathbb{Y}}_{m_{c},n}(\bar{\theta}_{m_{c}};\lambda) \nn\\
&\hspace{10mm} -(p_{m^{\ast}}-p_{m_{c}})\frac{\log n}{n}\geq0\bigg] \nn\\
&=P\left[\mathbb{Y}_{m_{c},0}(\bar{\theta}_{m_{c}})+o_{p}(1)\geq0\right] \nn\\
&\to0. \label{se:misconsis2}
\end{align}
\end{itemize}

From \eqref{se:misconsis1} and \eqref{se:misconsis2}, we have
\begin{align*}
\sum_{m_{c}\in\mathfrak{M}^{c}} P\left[\frac{1}{n}\left(\betabic^{(m_{c})}_{n}-\betabic^{(m^{\ast})}_{n}\right)\leq0\right]
&\to0.
\end{align*}
Therefore,
\begin{align*}
P\left[\min_{m_{c}\in\mathfrak{M}^{c}}\betabic^{(m_{c})}_{n}\leq\betabic^{(m^{\ast})}_{n}\right]&\to0
\end{align*}
as $n\to\infty$, and \eqref{se:model_consis3} is established.
In an analogous way, \eqref{se:model_consis4} can be shown.

\bigskip

\noindent
\textbf{Acknowledgements.}
The authors are grateful to the anonymous reviewers and editors for their valuable comments, which led to improvements in the paper.
This work was partially supported by JST CREST Grant Number JPMJCR2115 and JSPS KAKENHI Grant Numbers 
JP23K22410 (HM), and JP24K16971 (SE), Japan.

\bigskip
On behalf of all authors, the corresponding author states that there is no conflict of interest.
\bigskip 
\bibliographystyle{abbrv} 

\appendix
\section{Auxiliary results} \label{sec:appe}

\subsection{Robustified Gaussian quasi-likelihood inference}
In this section, we briefly present the robustified Gaussian quasi-likelihood inference following \cite{EguMas25}.

We recall that $\pr_\theta$ denotes the distribution of the random elements
\begin{equation}
\left(Y,X,\mu,\mu',\sig',w,w',J,J'\right)
\nonumber
\end{equation}
associated with $\theta\in\overline{\Theta}$.
We denote by $\E_{\theta}$ the corresponding expectation.
The conditional $\pr_\theta$-probability given $\mcf_{t_{j-1}}$ and the associated conditional expectation are denoted by $\pr^{j-1}_\theta[\cdot]:=\pr_\theta[\cdot|\mcf_{t_{j-1}}]$ and $\E^{j-1}_\theta[\cdot]:=\E_\theta[\cdot|\mcf_{t_{j-1}}]$, respectively.
We write $P=P_{\tz}$, $E=E_{\tz}$, $\pr^{j-1}[\cdot]=\pr^{j-1}_{\tz}[\cdot]$, and $\E^{j-1}[\cdot]=\E^{j-1}_{\tz}[\cdot]$.

Let $N_t := \sum_{0<s\le t} I(\D Y_s \ne 0)$, and let $N'_t := \sum_{0<s\le t} I(\D X_s \ne 0)$, where $\D Y_{s} := Y_{s} - Y_{s-}$ and $\D X_{s} := X_{s} - X_{s-}$ denote the jump size of $Y$ and $X$ at time $s$, respectively.
We use the shorthands $\sup_\theta$ and $\inf_\theta$ for $\sup_{\theta\in\overline{\Theta}}$ and $\inf_{\theta\in\overline{\Theta}}$, respectively.
Moreover, we write $C>0$ for a universal positive constant which may vary at each appearance, and $a_{n} \lesssim b_{n}$ for possibly random nonnegative sequences $(a_n)$ and $(b_n)$ if $a_{n}\le C b_{n}$ a.s. holds for every $n$ large enough.
The following assumptions are imposed to derive the asymptotic properties of density-power and the H\"{o}lder-based GQMLEs.

\begin{ass}
\label{hm:A_diff.coeff}~
\begin{enumerate}
\item The function $(x,\theta) \mapsto S(x,\theta)$ belongs to the class $\mcc^{2,4}(\mbbr^{d'} \times \Theta)$, with all the partial derivatives continuous in $\overline{\Theta}$ for each $x$.

\item 
$\ds{\sup_\theta |\p_x^k \p_\theta^l S(x,\theta)| \lesssim (1+|x|)^C}$ and there exists a constant $c_S'\ge 0$ such that
\begin{equation*}
\inf_{\theta}\lam_{\min}(S(x,\theta)) \gtrsim (1+|x|)^{-c_S'}
\nn
\end{equation*}
for $x\in\mbbr^{d'}$, where $\lam_{\min}(S(x,\theta))$ denotes the minimum eigenvalue of $S(x,\theta)$.
\end{enumerate}
\end{ass}

\begin{ass}
\label{hm:A_J}~
\begin{enumerate}
\item The numbers of jumps of $Y$ and $X$ are almost surely finite in $[0,T]$:
\begin{equation*}
    \pr\left[\max\{N_T,\, N'_T\} <\infty\right] =1.
\end{equation*}
\item There exist constants $\kappa>1/2$ and $c_1 \ge 0$ for which
\begin{equation*}\label{hm:A_J-1}
\pr^{j-1}\left[ \D_j N + \D_j N' \ge 1\right] \lesssim (1+|X_{t_{j-1}}|^{c_1}) \, h^{\kappa}
\end{equation*}
for $j=1,\dots,n$.
\item $\ds{\sup_{t\le T}\E[|J_t'|^K]<\infty}$ for any $K>0$, and
\begin{equation*}\label{hm:A_J-2}
    \sup_{t,s\in[0,T];\atop |t-s|\le h} \E\left[|J'_t - J'_s|^2\right] \lesssim h^{c'}
\end{equation*}
for some $c'>0$.
\end{enumerate}
\end{ass}

Assumptions \ref{hm:A_diff.coeff} and \ref{hm:A_J} concern the diffusion coefficient and the jump structure, respectively.
Under Assumption \ref{hm:A_J}, we have $J_{t} = \sum_{0<s\le t} \D Y_{s}$ and $J_{t}^{\prime} = \sum_{0<s\le t} \D X_{s}$.

\begin{ass}
\label{hm:A_drif.coeff&X}~
$\mu=(\mu_t)_{t\le T}$, $\mu'=(\mu'_t)_{t\le T}$ and $\sig'=(\sig'_t)_{t\le T}$ are $(\mcf_t)$-adapted {\cadlag} processes in $\mbbr^d$, $\mbbr^{d'}$, $\mbbr^{d'}\otimes\mbbr^{r'}$, respectively, such that 
\begin{align*}
& \sup_{t\in[0,T]} \E\left[|X_0|^K + |\mu_t|^K + |\mu'_t|^K + |\sig'_t|^K\right] <\infty,
\nn\\
& 
\max_{1\le j\le n}\sup_{t\in(t_{j-1},t_j)} 
\left(
\E\left[|\mu_t - \mu_{t_{j-1}}|^K; G_j\right]
+ 
\E\left[ |\sig'_t - \sig'_{t_{j-1}}|^2 \right] 
\right)
=o(1)
\end{align*}
for any $K\ge 2$, where $G_{j} = \left\{ \D_j N = 0,~ \D_j N' = 0\right\}$.
\end{ass}

\begin{ass}
\label{hm:A_iden}
We have
\begin{equation*}
\pr\big[\forall t\in[0,T],~S(X_t,\theta) = S(X_t,\tz)\big]=1
\label{hm:assump_iden}
\end{equation*}
if and only if $\theta=\tz$.
\end{ass}

\begin{ass}
\label{hm:A_lam}~
We have $\lam_n\to 0$ in such a way that for $\kappa>1/2$ in Assumption \ref{hm:A_J} \eqref{hm:A_J-1},
\begin{equation*}
\label{hm:lam.condition-2}
\frac{\sqrt{n}\,h^{\kappa}}{\lam_n} \to 0.
\end{equation*}
\end{ass}



\subsection{Basic tool for deriving the BIC}
For convenience, we provide a set of general conditions under which a quasi-marginal log-likelihood admits a Schwarz-type stochastic expansion.

\medskip

Given an underlying probability space $(\Omega,\mcf,\pr)$, let $\mbbh_{n}: \Theta\times \Omega\to\mbbr$ be a $\mcc^{3}(\Theta)$-random function, where $\Theta\subset\mbbr^{p}$ is a bounded convex domain, and let $\tz\in\Theta$ be a constant.
Let 
\begin{equation}
\D_n=\Delta_{n}(\tz):=n^{-1/2}\p_{\theta}\mbbh_{n}(\tz), \quad \Gam_n=\Gam_n(\tz):=-n^{-1}\p_{\theta}^{2}\mbbh_{n}(\tz). \nn
\end{equation}
We define the random field on $\mbbr^{p}$ associated with $\mbbh_{n}$ by
\begin{equation}
\mbbz_{n}(u):=\exp\left( \mbbh_{n}(\tz+n^{-1/2}u) - \mbbh_{n}(\tz) \right),
\nn
\end{equation}
and $\mbbz_{n}\equiv 0$ outside the set $\mbbu_{n}=\{u\in\mbbr^{p};\tz+n^{-1/2}u\in\Theta\}\subset\mbbr^{p}$.
We also define $\mbby_{n}(\theta):=n^{-1}\left(\mbbh_{n}(\theta)-\mbbh_{n}(\tz)\right)$ and let $\mbby(\theta)$ be an $\mcf$-measurable $\mbbr$-valued random function.
We consider a bounded prior density $\pi(\theta)$ on $\Theta$, which is continuous and positive at $\tz$.

\begin{thm}
\label{hm:thm.qbic}
In addition to the above setting, we assume the following conditions.
\begin{itemize}
\item Let $\Sig_0$ and $\Gam_0$ be almost surely positive definite random matrices in $\mbbr^{p\times p}$. 
The following joint convergence in distribution holds:
\begin{equation}
\left(\Delta_{n},\, \Gam_n\right) \overset{\mcl}\to \big( \Sig_{0}^{1/2}\eta,\, \Gam_{0}\big),
\label{appendix.thm1}
\end{equation}
where $\eta \sim N_{p}(0,I_{p})$ independent of $\mcf$, defined on an extension of the underlying probability space.

\item We have
\begin{align}
\sup_{\theta}\left| \frac1n \p_{\theta}^{3}\mbbh_{n}(\theta) \right| = O_{p}(1). \label{appendix.thm4}
\end{align}
\noeqref{appendix.thm4}

\item There exists a constant $\ep\in(0,1/2]$ such that
\begin{equation}
\sup_{\theta}\big| n^{\ep}\left(\mbby_{n}(\theta)-\mbby(\theta)\right)\big| = O_p(1).
\label{appendix.thm5}
\end{equation}
\noeqref{appendix.thm5}

\item There exists an $\mcf$-measurable random variable $\chi_{0}$ that is almost surely positive such that, for each $\kappa>0$,
\begin{equation}
\sup_{\theta:\, |\theta-\tz|\ge\kappa}\mbby(\theta) \le -\chi_{0}\kappa^{2}\qquad \text{a.s.}
\label{appendix.thm6}
\end{equation}

\end{itemize}
Then, for any $\tes \in \argmax_{\theta}\mbbh_{n}(\theta)$, we have 
\begin{equation}
\sqrt{n}(\tes-\tz) = \Gam_{0}^{-1}\Delta_{n} + o_p(1) \cil \Gam_{0}^{-1}\Sigma_{0}^{1/2}\eta \sim MN_p(0,\Gam_0^{-1}\Sig_0 \Gam_0^{-1})
\nonumber
\end{equation}
and
\begin{align}
\int \left|\mbbz_{n}(u)\,\pi(\tz+n^{-1/2}u) - \mbbz_{n}^{0}(u)\,\pi(\tz) \right|du \cip 0, \label{se:thm.qbic.cip}
\end{align}
where
\begin{equation}
\mbbz^{0}_{n}(u) =\exp\bigg( \Delta_{n}[u] - \frac{1}{2}\Gam_{0}[u,u] \bigg).
\nonumber
\end{equation}
\end{thm}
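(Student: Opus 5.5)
The plan has three steps: first consistency of $\tes$, then the linear representation of $\sqrt{n}(\tes-\tz)$ from a Taylor expansion of the score, and finally the $L^1$ convergence \eqref{se:thm.qbic.cip} by splitting the $u$-domain into a local and a global part.

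Throughout, I will read \eqref{appendix.thm1} as \emph{stable} convergence, which is the form delivered in \cite{EguMas25}. Since $\Gam_0$ is $\mcf$-measurable, stable convergence gives $\Gam_n-\Gam_0\cip 0$. Plain convergence in distribution would not suffice for this, and the step needs it. I will also work on events of probability at least $1-\ep'$ on which all random constants (the $O_p(1)$ quantities, $\chi_0^{-1}$, $\lam_{\min}(\Gam_0)^{-1}$, $|\Gam_0|$) are bounded by a deterministic $K$.

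\emph{Step 1 (consistency).} Since $\mbby_n(\tes)\ge 0=\mbby_n(\tz)$, \eqref{appendix.thm5} gives $\mbby(\tes)\ge -O_p(n^{-\ep})$.

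Suppose $|\tes-\tz|\ge\kappa$. Then \eqref{appendix.thm6} gives $\mbby(\tes)\le-\chi_0\kappa^2$. Combining the two bounds yields $|\tes-\tz|^2\le \chi_0^{-1}O_p(n^{-\ep})$, so $\tes\cip\tz$ (indeed at rate $O_p(n^{-\ep/2})$).

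\emph{Step 2 (linear representation).} Because $\tz$ is interior, with probability tending to one $\tes$ lies in a ball around $\tz$ inside the convex set $\Theta$ and $\p_\theta\mbbh_n(\tes)=0$. A Taylor expansion around $\tz$ gives
\begin{equation*}
0=\Delta_n-\Big(\Gam_n+O_p(1)|\tes-\tz|\Big)\sqrt{n}(\tes-\tz),
\end{equation*}
where the remainder bound uses \eqref{appendix.thm4}.

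Since $\Gam_n\cip\Gam_0$ with $\Gam_0$ a.s. positive definite and $\tes-\tz=o_p(1)$, the bracket is invertible with probability tending to one. Hence $\sqrt{n}(\tes-\tz)=\Gam_0^{-1}\Delta_n+o_p(1)$. Stable convergence of $\Delta_n$ together with the $\mcf$-measurability of $\Gam_0$ then yields the mixed normal limit $MN_p(0,\Gam_0^{-1}\Sig_0\Gam_0^{-1})$.

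\emph{Step 3 (the $L^1$ bound).} I split the $u$-integral into $A_n=\{|u|\le\delta\sqrt n\}$ and $A_n^c\cap\mbbu_n$, with $\delta>0$ small and to be fixed.

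On $A_n$, a third-order Taylor expansion gives
\begin{equation*}
\log\mbbz_n(u)=\Delta_n[u]-\tfrac12\Gam_n[u,u]+R_n(u),\qquad |R_n(u)|\le K|u|^3/\sqrt n\le K\delta|u|^2.
\end{equation*}
Choose $\delta$ so small that $K\delta\le\lam_{\min}(\Gam_0)/8$ on the good event. Then, on $A_n$ and uniformly in $n$, $\mbbz_n(u)\le\exp(K|u|-c|u|^2)$ with $c>0$; the same type of bound holds for $\mbbz^0_n$.

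For each fixed $u$, $\mbbz_n(u)\pi(\tz+n^{-1/2}u)-\mbbz^0_n(u)\pi(\tz)\cip0$, using $\Gam_n-\Gam_0\cip0$, $R_n(u)\to0$ and continuity of $\pi$ at $\tz$. The integral over $A_n$ then tends to zero in probability by dominated convergence, applied along subsequences: extract an a.s.-convergent subsequence on a countable dense set of $u$, then use equicontinuity on compacts together with the Gaussian tail bound.

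On $A_n^c\cap\mbbu_n$ we have $|\theta-\tz|\ge\delta$ for $\theta=\tz+n^{-1/2}u$. By \eqref{appendix.thm5}--\eqref{appendix.thm6},
\begin{equation*}
\mbbz_n(u)=\exp\{n\mbby_n(\theta)\}\le\exp\{-n\chi_0\delta^2+Kn^{1-\ep}\}\le\exp(-n\chi_0\delta^2/2)
\end{equation*}
for large $n$. Because $\Theta$ is bounded, $\mathrm{Leb}(\mbbu_n)=O(n^{p/2})$, and $\pi$ is bounded, so this part of the integral is $O(n^{p/2}e^{-cn})\to0$. The Gaussian tail of $\mbbz_n^0$ over $|u|>\delta\sqrt n$ vanishes likewise.

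The main obstacle is the uniformity on $A_n$. The constant $\delta$ and the dominating function are random, because they depend on $\lam_{\min}(\Gam_0)$ and on the $O_p(1)$ bound in \eqref{appendix.thm4}. I would handle this by first restricting to the high-probability event where these are bounded by $K$, fixing a deterministic $\delta=\delta(K)$ there, running the argument above, and finally letting $\ep'\downarrow0$.
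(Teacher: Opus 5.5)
Your proposal is correct and follows essentially the route the paper relies on when it cites the single-rate version of the proof of Theorem 1 in \cite{JasKamMas19}. That route is consistency from the uniform rate \eqref{appendix.thm5} and the quadratic separation \eqref{appendix.thm6}, the linear representation from a score expansion controlled by \eqref{appendix.thm4}, and the $L^1$ convergence obtained by splitting the $u$-integral into $\{|u|\le\delta\sqrt n\}$ (Gaussian domination) and its complement (exponential bound times $\mathrm{Leb}(\mbbu_n)=O(n^{p/2})$). Your reading of \eqref{appendix.thm1} as stable convergence, so that $\Gam_n\cip\Gam_0$, is what the conclusion needs; the remaining details are to take $\delta<\mathrm{dist}(\tz,\p\Theta)$ so that $\{|u|\le\delta\sqrt n\}\subset\mbbu_n$, and to add $\lam_{\min}(\Gam_n)\ge\lam_{\min}(\Gam_0)/2$ to your good event (a uniform bound on $\{|u|\le L\}$ plus the Gaussian tail then replaces the subsequence argument).
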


Theorem \ref{hm:thm.qbic} can be deduced as a single-parameter version of \cite[Proof of Theorem 1]{JasKamMas19}. 
The change of variable $\theta=\tz+n^{-1/2}u$ and \eqref{se:thm.qbic.cip} implies the stochastic expansion
\begin{align}
&-\log\bigg(\int_{\Theta}\exp\{\mbbh_{n}(\theta)\}\pi(\theta)d\theta\bigg)
\nn \\
&=-\mbbh_{n}(\tz) + \frac{p}{2}\log n-\log\left\{\int_{\mbbu_{n}}\mbbz_{n}(u)\,\pi(\tz+n^{-1/2}u)du\right\} \nn \\
&=-\mbbh_{n}(\tz) + \frac{p}{2}\log n - \log\pi(\tz) - \frac{p}{2}\log(2\pi)
+\frac{1}{2}\log|\Gam_{0}| - \frac{1}{2}\Gam_{0}^{-1}\left[\D_{n}^{\otimes 2}\right] + o_{p}(1) \nn \\
&=-\mbbh_{n}(\tz) + \frac{p}{2}\log n+O_{p}(1). \label{se:thm.qbic.cip2}
\end{align}

In the proof of \cite[Theorem 3.4]{EguMas25}, it is shown that \eqref{appendix.thm1}--\eqref{appendix.thm6} hold when the density-power and the H\"{o}lder-based GQLFs are taken as the random function $\mbbh_{n}$.
Hence, Theorem \ref{hm:thm.qbic} holds in our model setting under Assumptions \ref{hm:A_diff.coeff}--\ref{hm:A_lam}, and \ref{se:prior}.


\section{Additional numerical experiment} \label{se:app2}
In this section, we present a numerical experiment in the absence of jumps. 
The setup of the numerical experiment is the same as that in Section \ref{se:simu1}, except for the data-generating model.
We consider the following model, obtained by removing the jump component $J$ from the data-generating model in Section \ref{se:simu1},  for generating the sample data $(X_{t_{j}},Y_{t_{j}})_{j=0}^{n}$ with $t_{j}=j/n$:
\begin{align*}
dY_{t}&=\exp\left\{\frac{1}{2}X_{t}\left(\begin{array}{c} -2 \\ 3 \\ 0 \end{array}\right)\right\}dw_{t}=\exp\left\{\frac{1}{2}(-2X_{1,t}+3X_{2,t})\right\}dw_{t}, \\
X_{t_{j}}&=(X_{1,t_{j}},X_{2,t_{j}},X_{3,t_{j}})=\left(\cos\left(\frac{2j\pi}{n}\right),\sin\left(\frac{2j\pi}{n}\right),\cos\left(\frac{4j\pi}{n}\right)\right), \\
Y_{0}&=0, \quad t\in[0,1].
\end{align*}

Table \ref{candsimu_appndix} summarizes the model selection frequencies.
The dpGQBIC and GQBIC tend to select Diff 2, which is the true coefficient, with high frequency for all $n$ and $\lambda$, and this
selection frequency increases with $n$.
Moreover, although the selection frequency of Diff 2 under the HGQBIC increases with $n$, the HGQBIC tends to select Diff 6, which corresponds to a smaller coefficient than the true one, when both $n$ and $\lambda$ are too small.

\begin{table}[t]
\begin{center}
\caption{Model selection frequencies for various situations in Appendix \ref{se:app2}.}
\scalebox{0.9}[0.9]{
\begin{tabular}{l l | r r r r r r r} \hline
& & \multicolumn{7}{c}{} \\[-3mm]
dpGQBIC & $n$ & Diff 1 & Diff $2^{\ast}$ & Diff 3 & Diff 4 & Diff 5 & Diff 6 & Diff 7 \\[1mm] \hline
& & \multicolumn{7}{c}{} \\[-3mm]
$\lambda=0.01$ & $100$ & 28 & 972 & 0 & 0 & 0 & 0 & 0 \\[1mm]
& $500$ & 17 & 983 & 0 & 0 & 0 & 0 & 0 \\[1mm]
& $1000$ & 4 & 996 & 0 & 0 & 0 & 0 & 0 \\[1mm] \hline
& & \multicolumn{7}{c}{} \\[-3mm]
$\lambda=0.05$ & $100$ & 23 & 977 & 0 & 0 & 0 & 0 & 0 \\[1mm]
& $500$ & 12 & 988 & 0 & 0 & 0 & 0 & 0 \\[1mm]
& $1000$ & 1 & 999 & 0 & 0 & 0 & 0 & 0 \\[1mm] \hline
& & \multicolumn{7}{c}{} \\[-3mm]
$\lambda=0.2$ & $100$ & 2 & 998 & 0 & 0 & 0 & 0 & 0 \\[1mm]
& $500$ & 2 & 998 & 0 & 0 & 0 & 0 & 0 \\[1mm]
& $1000$ & 0 & 1000 & 0 & 0 & 0 & 0 & 0 \\[1mm] \hline
& & \multicolumn{7}{c}{} \\[-3mm]
HGQBIC & $n$ & Diff 1 & Diff $2^{\ast}$ & Diff 3 & Diff 4 & Diff 5 & Diff 6 & Diff 7 \\[1mm] \hline
& & \multicolumn{7}{c}{} \\[-3mm]
$\lambda=0.01$ & $100$ & 0 & 0 & 0 & 0 & 0 & 1000 & 0 \\[1mm]
& $500$ & 0 & 348 & 0 & 0 & 0 & 652 & 0 \\[1mm]
& $1000$ & 0 & 1000 & 0 & 0 & 0 & 0 & 0 \\[1mm] \hline
& & \multicolumn{7}{c}{} \\[-3mm]
$\lambda=0.05$ & $100$ & 0 & 426 & 0 & 0 & 0 & 574 & 0 \\[1mm]
& $500$ & 0 & 1000 & 0 & 0 & 0 & 0 & 0 \\[1mm]
& $1000$ & 0 & 1000 & 0 & 0 & 0 & 0 & 0 \\[1mm] \hline
& & \multicolumn{7}{c}{} \\[-3mm]
$\lambda=0.2$ & $100$ & 0 & 992 & 0 & 0 & 0 & 8 & 0 \\[1mm]
& $500$ & 0 & 1000 & 0 & 0 & 0 & 0 & 0 \\[1mm]
& $1000$ & 0 & 1000 & 0 & 0 & 0 & 0 & 0 \\[1mm] \hline
& & \multicolumn{7}{c}{} \\[-3mm]
GQBIC & $n$ & Diff 1 & Diff $2^{\ast}$ & Diff 3 & Diff 4 & Diff 5 & Diff 6 & Diff 7 \\[1mm] \hline
& & \multicolumn{7}{c}{} \\[-3mm]
& $100$ & 34 & 966 & 0 & 0 & 0 & 0 & 0 \\[1mm]
& $500$ & 19 & 981 & 0 & 0 & 0 & 0 & 0 \\[1mm]
& $1000$ & 5 & 995 & 0 & 0 & 0 & 0 & 0 \\[1mm] \hline
\end{tabular}
}
\label{candsimu_appndix}
\end{center}
\end{table}

\end{document}